\documentclass[reqno]{amsart}
\usepackage{amsmath, amsthm, amssymb, amstext}
\usepackage{hyperref,xcolor}
\definecolor{bluecite}{HTML}{0875b7}
\hypersetup{
pdfborder={0 0 0},
colorlinks,
linkcolor=bluecite,
citecolor=bluecite
}

\usepackage{enumitem}
\setlength{\parindent}{1.2em}
\allowdisplaybreaks

\newtheorem{theorem}{Theorem}

\newtheorem{proposition}[theorem]{Proposition}
\newtheorem{corollary}[theorem]{Corollary}
\newtheorem{definition}[theorem]{Definition}


              %
              %
              %
\DeclareMathOperator*{\divergenz}{div}              %

\newcommand*\diff{\mathop{}\!\mathrm{d}}

\newcommand{\N}{\mathbb{N}}

\newcommand{\R}{\mathbb{R}}

\newcommand{\W}{W^{1,\mathcal{H}}(\Omega)}
\newcommand{\Lp}[1]{L^{#1}(\Omega)}

\newcommand{\Lprand}[1]{L^{#1}(\partial\Omega)}
\newcommand{\Wp}[1]{W^{1,#1}(\Omega)}

\newcommand{\Wpzero}[1]{W^{1,#1}_0(\Omega)}

\newcommand{\eps}{\varepsilon}
\newcommand{\ph}{\varphi}

\newcommand{\into}{\int_{\Omega}}

\newcommand{\weak}{\rightharpoonup}

\newcommand{\Linf}{L^{\infty}(\Omega)}

\renewcommand{\l}{\left}
\renewcommand{\r}{\right}
\numberwithin{theorem}{section}
\numberwithin{equation}{section}

\allowdisplaybreaks

\title[Singular Finsler double phase problems ]{Singular Finsler double phase problems with nonlinear boundary condition}

\author[C.\,Farkas]{Csaba Farkas}
\address[C.\,Farkas]{Sapientia Hungarian University of Transylvania, Department of Mathematics and Computer Science, T\^{a}rgu Mure\textcommabelow{s}, Romania}
\email{farkascs@ms.sapientia.ro, farkas.csaba2008@gmail.com}

\author[A.\,Fiscella]{Alessio Fiscella}
\address[A.\,Fiscella]{Departamento de Matem\'{a}tica, Universidade Estadual de Campinas, IMECC, Rua S\'{e}rgio Buarque de Holanda, 651, Campinas, SP CEP 13083-859, Brazil}
\email{fiscella@ime.unicamp.br}

\author[P.\,Winkert]{Patrick Winkert}
\address[P.\,Winkert]{Technische Universit\"{a}t Berlin, Institut f\"{u}r Mathematik, Stra\ss e des 17.\,Juni 136, 10623 Berlin, Germany}
\email{winkert@math.tu-berlin.de}

\subjclass{35J15, 35J62, 58B20, 58J60}
\keywords{Anisotropic double phase operator, critical type exponent, existence results, Minkowski space, nonlinear boundary condition, singular problems}

\begin{document}

\begin{abstract} 
	In this paper we study a singular Finsler double phase problem with a nonlinear boundary condition and perturbations that have a type of critical growth, even on the boundary. Based  on variational methods in combination with truncation techniques we prove the existence of at least one weak solution for this problem under very general assumptions. Even in the case when the Finsler manifold reduces to the Euclidean norm, our work is the first one dealing with a singular double phase problem and nonlinear boundary condition.
\end{abstract}
\maketitle

\section{Introduction}
In this paper we consider singular Finsler double phase problems with nonlinear boundary condition. The Finsler double phase operator is defined by
\begin{align}\label{finsler_double_phase_operator}
	\divergenz (A(u)):=\divergenz\big(F^{p-1}(\nabla u)\nabla F(\nabla u) +\mu(x) F^{q-1}(\nabla u)\nabla F(\nabla u)\big)
\end{align}
for $u\in \W$ with $\W$ being the Musielak-Orlicz Sobolev space  and $F$ is a positive homogeneous function such that $F\in C^\infty(\mathbb R^N\setminus \{0\})$ and the Hessian matrix $\nabla^2 ({F^2}/{2})(x)$ is positive definite for all $x\neq 0$. Furthermore, $\mu$ is a nonnegative bounded function and $1<p<q<N$. If $F$ coincides with the Euclidean norm, that is, $F(\xi)=\left(\sum_{i=1}^N |\xi_i|^2\right)^{1/2}$ for $\xi\in\R^N$, then \eqref{finsler_double_phase_operator} reduces to the usual double phase operator given by
\begin{align}\label{double_phase_operator}
	\divergenz \big(|\nabla u|^{p-2} \nabla u+ \mu(x) |\nabla u|^{q-2} \nabla u\big).
\end{align}
Also, if $\mu\equiv 0$ or $\inf_{\overline{\Omega}}\mu>0$, then \eqref{double_phase_operator} (similarly \eqref{finsler_double_phase_operator}) reduces to the (Finsler) $p$-Laplacian or the (Finsler) $(q,p)$-Laplacian, respectively. The study of such operators and corresponding energy functionals are motivated by physical phenomena, see, for example, the work of Zhikov \cite{Zhikov-1986} (see also the monograph of Zhikov-Kozlov-Ole\u{\i}nik \cite{Zhikov-Kozlov-Oleinik-1994}) in order to describe models for strongly anisotropic materials. Related functionals to \eqref{double_phase_operator} have been studied intensively with respect to regularity properties of local minimizers, see the works of Baroni-Colombo-Mingione \cite{Baroni-Colombo-Mingione-2015,Baroni-Colombo-Mingione-2016,Baroni-Colombo-Mingione-2018}, Baroni-Kuusi-Mingione \cite{Baroni-Kuusi-Mingione-2015}, Byun-Oh \cite{Byun-Oh-2020}, Colombo-Mingione \cite{Colombo-Mingione-2015a,Colombo-Mingione-2015b}, De Filippis-Palatucci \cite{De-Filippis-Palatucci-2019},
Marcellini \cite{Marcellini-1991,Marcellini-1989b}, Ok \cite{Ok-2018,Ok-2020}, Ragusa-Tachikawa \cite{Ragusa-Tachikawa-2020} and the references therein.

On the other hand, the minimization of the functional $E_F\colon H^1_0(\Omega)\to \R$ defined by
\begin{align*}
	E_F(u)= \int_\Omega F^2(\nabla u)\diff x \quad \text{for }u\in H^1_0(\Omega),
\end{align*}
under certain constraints on perimeter or volume occurs in many subjects of mathematical physics. Here the minimizer corresponds to an optimal shape (or configuration) of anisotropic tension-surface. The minimization of the functional $E_F$  describes, for example, the specific polyhedral shape of crystal structures in solid crystals with sufficiently small grains, as shown by Dinghas \cite{Dinghas-1944} and Taylor \cite{Taylor-2002}. It is clear that $E_F$ is the energy functional to the Finsler Laplacian given by 
\begin{align}\label{finsler_laplace_operator}
	\Delta_F u= {\rm div} (F(\nabla u)\nabla F(\nabla u)).
\end{align}
The Finsler Laplacian, given in \eqref{finsler_laplace_operator}, has been studied by several authors in the last decade. We refer, for example, to the papers of Cianchi-Salani \cite{Cianchi-Salani}  and Wang-Xia \cite{Wang-Xia-2011}, both dealing with the Serrin-type overdetermined anisotropic problem, or to Farkas-Fodor-Krist\'aly \cite{Farkas-Fodor-Kristaly-2015} who studied a sublinear Dirichlet problem of this type.  Related works concerning anisotropic phenomena can be found in the works of Bellettini-Paolini \cite{Bellettini-Paolini-1996}, Belloni-Ferone-Kawohl \cite{Belloni-Ferone-Kawohl-2003}, Della Pietra-Gavitone \cite{DellaPietra-Gavitone-2016}, Della Pietra-di Blasio-Gavitone \cite{DellaPietra-diBlasio-Gavitone-2020}, Della Pietra-Gavitone-Piscitelli \cite{DellaPietra-Gavitone-Piscitelli-2019}, Farkas \cite{Farkas-2020}, Farkas-Krist\'aly-Varga \cite{Farkas-Kristaly-Varga-2015}, Ferone-Kawohl \cite{Ferone-Kawohl-2009} and the references therein.

In this paper we combine the effect of a Finsler manifold and a double phase operator along with a singular term and a nonlinear boundary condition. More precisely, we study the following problem
\begin{equation}\label{problem}
	\begin{aligned}
		-\divergenz (A(u)) +u^{p-1}+\mu(x)u^{q-1}& = u^{p^{*}-1}+\lambda \l(u^{\gamma-1}+ g_1(x,u)\r) \quad && \text{in } \Omega,\\
		A(u)\cdot \nu & = u^{p_*-1}+g_2(x,u) &&\text{on } \partial \Omega,\\
		u & > 0 &&\text{in } \Omega,\\
	\end{aligned}
\end{equation}
where $\Omega \subset \R^N$, $N\geq 2$,  is a bounded domain with Lipschitz boundary $\partial\Omega$, $\nu(x)$ is the outer unit normal of $\Omega$ at the point $x \in \partial\Omega$, $\lambda$ is a positive parameter and  the following assumptions hold true:
\begin{enumerate}
	\item[\textnormal{(H)}]
	\begin{enumerate}
		\item[\textnormal{(i)}]
			$0<\gamma<1$ and
			\begin{align}\label{conditions-exponents}
				1 < p<q<N,\quad  q<p^*,\quad 0 \leq \mu(\cdot)\in \Linf;
			\end{align}
		\item[\textnormal{(ii)}]
			$g_1\colon\Omega\times\R\to\R$ and $g_2\colon \partial\Omega \times \R\to\R$ are Carath\'eodory functions and there exist $1<\theta_1<p\leq\nu_1<p^*$, $p< \nu_2<p_*$ as well as nonnegative constants $a_1, a_2, b_1$ such that
			\begin{equation*}
				\begin{aligned}
					\qquad \qquad\qquad g_1(x,s)&\leq a_{1}s^{\nu_1-1}+b_{1}s^{\theta_1-1}&&\text{for a.\,a.\,}x\in\Omega \text{ and for all } s\geq 0,\\
					g_2(x,s)&\leq a_{2}s^{\nu_2-1} &&\text{for a.\,a.\,}x\in\partial \Omega \text{ and for all } s\geq 0,
				\end{aligned}
			\end{equation*}
			where $p^*$ and $p_*$  are the critical exponents to $p$ given by
			\begin{align}\label{critical_exponent}
				p^*:=\frac{Np}{N-p}\quad\text{and}\quad p_*:=\frac{(N-1)p}{N-p};
			\end{align}
		\item[\textnormal{(iii)}]
			the function $F\colon\R^N \to [0,\infty)$ is a positively homogeneous Minkowski norm with finite reversibility $\displaystyle r_F=\max_{w\neq 0}\frac{F(-w)}{F(w)}$.
	\end{enumerate}
\end{enumerate}

Since we are looking for positive solutions and hypothesis \textnormal{(H)(ii)} concerns the positive semiaxis $\R_+=[0,\infty)$, without any loss of generality, we may assume that $g_1(x,s)=g_2(x,s)=0$ for all $s\leq 0$ and for a.\,a.\,$x\in\Omega$, respectively, $x\in\partial\Omega$. Moreover, note that we always have $r_F \geq 1$, see for example in Farkas-Krist\'{a}ly-Varga \cite{Farkas-Kristaly-Varga-2015}. It is clear that the Euclidean norm has finite reversibility. Finally, we observe that \eqref{conditions-exponents} implies that $\W \hookrightarrow \Lp{q}$ compactly, as shown in Section \ref{section_2}.

\begin{definition}\label{def_weak_solution}
A function $u\in \W$ is called a weak solution of problem \eqref{problem} if $u^{\gamma-1}\varphi\in L^{1}(\Omega)$, $u>0$ for a.\,a.\,$x\in\Omega$
and if
\begin{align*}
	\begin{split}
		& \int_{\Omega}\left(F^{p-1}(\nabla u)\nabla F(\nabla u) +\mu(x) F^{q-1}(\nabla u)\nabla F(\nabla u)\right)\cdot \nabla\varphi\diff x\\ 
		&+\into u^{p-1}\ph \diff x+\into \mu(x) u^{q-1}\ph\diff x\\
		&=\int_{\Omega}u^{p^{*}-1}\varphi\diff x+\lambda \into \l( u^{\gamma-1}+g_1(x,u)\r)\varphi\diff x
		+\int_{\partial \Omega}\l(u^{p_*-1}+g_2(x,u)\r) \varphi\diff \sigma
	\end{split}
\end{align*}
is satisfied for all $\varphi\in\W$.
\end{definition}
From hypotheses \textnormal{(H)} we know that the definition of a weak solution is well defined.

The main result in this paper is the following theorem.

\begin{theorem}\label{main_theorem}
	Let hypotheses \textnormal{(H)} be satisfied. Then there exists $\lambda_*>0$ such that for every $\lambda\in (0,\lambda_*)$, problem \eqref{problem} has a nontrivial weak solution. 
\end{theorem}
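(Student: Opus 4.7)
The plan is to construct a weak solution as a local minimizer of the energy functional on a small ball of $\W$, using two structural facts: the singular power $\gamma<1$ drags the infimum strictly below zero near the origin, while on a ball whose radius is small relative to the Sobolev and trace best constants the double phase modular dominates all the critical and subcritical perturbations, so that concentration can be ruled out directly without appealing to a full profile decomposition.

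I would first introduce the continuous (but not $C^1$) energy
\[
\begin{aligned}
J_\lambda(u)= &\into\left(\frac{F^p(\nabla u)}{p}+\mu(x)\frac{F^q(\nabla u)}{q}\right)\diff x+\into\left(\frac{|u|^p}{p}+\mu(x)\frac{|u|^q}{q}\right)\diff x\\
&-\frac{1}{p^*}\into (u^+)^{p^*}\diff x-\frac{\lambda}{\gamma}\into (u^+)^{\gamma}\diff x-\lambda\into G_1(x,u)\diff x\\
&-\frac{1}{p_*}\intor (u^+)^{p_*}\diff \sigma-\intor G_2(x,u)\diff \sigma,
\end{aligned}
\]
where $G_i(x,s)=\int_0^s g_i(x,t)\diff t$; the singular integral is well defined since $\W\hookrightarrow \Lp{\gamma}$. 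Using convexity of $F^p,F^q$, the modular-norm equivalence on $\W$, the embeddings $\W\hookrightarrow \Lp{p^*}$ and $\W\hookrightarrow \Lprand{p_*}$, and the growth bounds in \textnormal{(H)(ii)}, one obtains
\[
J_\lambda(u)\geq c_1\min(\|u\|^p,\|u\|^q)-c_2\|u\|^{p^*}-c_3\|u\|^{p_*}-c_4\|u\|^{\nu_1}-c_5\|u\|^{\nu_2}-c_6\|u\|^{\theta_1}-c_7\lambda\|u\|^{\gamma}.
\]
Since $p^*,p_*,\nu_1,\nu_2\geq p$ while $\theta_1<p$ and $\gamma<1$, for $\lambda<\lambda_*$ (with $\lambda_*>0$ suitably small) the right-hand side is strictly positive on some intermediate sphere $\{\|u\|=\rho\}$, while for any fixed $\ph_0\in\W$ with $\ph_0>0$ and small $t_0>0$, the test $u_0=t_0\ph_0$ gives $J_\lambda(u_0)<0$ because the $-\lambda t_0^\gamma/\gamma\cdot\into\ph_0^\gamma$ contribution dominates near zero. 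Hence $m_\lambda:=\inf_{\overline{B_\rho}}J_\lambda<0$.

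The direct method on $\overline{B_\rho}$ then produces a minimizer $u_\lambda$: the modular and $L^p,L^q$ parts are weakly lower semicontinuous by convexity; the subcritical integrals on $\Omega$ and $\partial\Omega$ are weakly continuous through the compact embeddings into $\Lp{\gamma}$, $\Lp{\theta_1}$, $\Lp{\nu_1}$ and $\Lprand{\nu_2}$; and the critical integrals are handled by a Brezis-Lieb decomposition, whereby the potential loss $\into(u_n-u_\lambda)^{p^*}\diff x$ is bounded by the Sobolev best constant times a power of the $\W$-norm of $u_n-u_\lambda$ that, on a ball of radius $\rho=\rho(\lambda)$ taken small, is absorbed into the leading modular, forcing $u_n\to u_\lambda$ strongly (analogously for the boundary critical term via the trace constant). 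Thus $u_\lambda$ attains $m_\lambda<0$, lies strictly inside $B_\rho$ because $J_\lambda>0$ on $\partial B_\rho$, and, after replacing $u_\lambda$ by $u_\lambda^+$ (which cannot increase $J_\lambda$), may be taken nonnegative.

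The main obstacle is converting local minimality into the weak formulation of Definition \ref{def_weak_solution}, since the singular term precludes classical differentiation of $J_\lambda$. For a nonnegative $\ph\in\W$ and small $t>0$, the inequality $J_\lambda(u_\lambda+t\ph)\geq J_\lambda(u_\lambda)$ together with the concavity of $s\mapsto s^\gamma$ yields, after dividing by $t$ and letting $t\to 0^+$, a one-sided variational inequality whose singular part is controlled pointwise by $\lambda u_\lambda^{\gamma-1}\ph$. To apply Fatou's lemma in this passage I would need $u_\lambda>0$ a.e.\ with a quantitative boundary decay, which I would establish by comparison with a subsolution of $-\divergenz A(v)=\lambda v^{\gamma-1}$ on subdomains, producing a lower bound $u_\lambda\geq c\,\dist(\cdot,\partial\Omega)^s$ for some $c,s>0$. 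This bound makes $u_\lambda^{\gamma-1}\ph\in\Lp{1}$ for every $\ph\in\W$; testing with both $\ph$ and $-\ph$ on the non-singular part then recovers the equality. The hardest technical step is exactly the fusion of this Fatou-based, singular test-function manipulation with the double critical growth (interior $p^*$ and boundary $p_*$), which forces the entire analysis to take place on a ball whose radius is quantified by the Sobolev and trace best constants.
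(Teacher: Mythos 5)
Your skeleton (minimize $J_\lambda$ on a small ball calibrated by the Sobolev and trace constants, Br\'ezis--Lieb for the two critical terms, negativity near zero coming from the singular power, one-sided differentiation plus Fatou) is indeed the route the paper takes, but two of your key steps are asserted rather than provable as stated. First, the compactness/absorption step: after Br\'ezis--Lieb you must absorb $\tfrac{1}{p^*}\|u_n-u\|_{p^*}^{p^*}$ and $\tfrac{1}{p_*}\|u_n-u\|_{p_*,\partial\Omega}^{p_*}$ into the \emph{deficit} of the gradient term, i.e.\ you need a quantitative bound of the type $\liminf_n\big(\|F(\nabla u_n)\|_p^p-\|F(\nabla u)\|_p^p\big)\geq c\,\liminf_n\|F(\nabla(u_n-u))\|_p^p$. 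Weak convergence in $\W$ gives no such control, and the pointwise convexity inequality that would deliver it fails for $1<p<2$ (exactly the case the paper is at pains to include). The paper's mechanism is the truncation pair $T_\kappa,R_\kappa$ combined with the inequality $\tfrac{1}{2^{r-1}r_F^r}F^r(w_1-w_2)-2F^r(w_2)\le F^r(w_1)-F^r(w_2)$ applied to $R_\kappa(u_n),R_\kappa(u)$, letting $\kappa\to\infty$ to kill the error $\|F(\nabla R_\kappa(u))\|_p^p$; this yields only sequential weak lower semicontinuity of $J_\lambda$ on $B_\varrho$, which is all the direct method needs --- your claim of strong convergence of the minimizing sequence is both stronger than necessary and unsupported. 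A related caution: your coercivity bound $c_1\min(\|u\|^p,\|u\|^q)$ in the full Musielak--Orlicz norm does not give positivity on a small sphere, since hypotheses (H) allow $q\geq p_*$ and $q\geq\nu_2$; the ball has to be calibrated by $\|\cdot\|_{1,p,F}$ (as in the paper's $B_\varrho$ and the function $\Psi$), so that the pure $p$-terms dominate all critical and boundary terms.

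The second, more serious gap is the passage from local minimizer to weak solution. You propose to prove $u\ge c\,\dist(\cdot,\partial\Omega)^s$ by comparison with a subsolution of the Finsler double phase operator; under (H) (only $\mu\in\Linf$, anisotropic $F$, critical reaction and boundary terms) no strong maximum principle, Hopf lemma or comparison principle of this kind is available, and the paper never uses one. It is also unnecessary, and your use of Fatou is backwards: Fatou's lemma is applied to the nonnegative difference quotients $h_n=[(u+t_n\ph)^\gamma-u^\gamma]/t_n$, and the integrability $u^{\gamma-1}\ph\in L^1(\Omega)$ is a \emph{consequence} of the finiteness of the limit of the remaining, differentiable terms, not a prerequisite; all that is required is $u>0$ a.e., which follows from a purely variational contradiction (if $u=0$ on a set $C$ of positive measure, the quotient $(J_\lambda(u+t\ph)-J_\lambda(u))/t$ contains $-\lambda\gamma^{-1}t^{\gamma-1}\int_C\ph^\gamma\,dx\to-\infty$, contradicting minimality). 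Finally, ``testing with both $\ph$ and $-\ph$ on the non-singular part'' does not recover the equality: minimality only yields a one-sided inequality for $\ph\ge0$, and the singular term cannot be split off. The missing ingredient is the scaling argument $\beta(t)=J_\lambda((1+t)u)$, whose differentiability and minimality at $t=0$ give the equality tested with $u$ itself; inserting $\ph=(u+\eps v)_+$ into the inequality, subtracting this identity, dividing by $\eps$ and letting $\eps\to0$ (the measure of $\{u+\eps v<0\}$ tends to zero) then yields the weak formulation for every $v\in\W$. Without these devices your argument does not close.
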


To the best of our knowledge, this is the first work on a singular double phase problem with nonlinear boundary condition even in the Euclidean case, that is when $F(\xi)=\left(\sum_{i=1}^N |\xi_i|^2\right)^{1/2}$ for $\xi\in\R^N$. The novelty of our paper is not only due by the combination of the Finsler double phase operator with a singular term and nonlinear boundary condition. Indeed, in \eqref{problem} we also deal with a type of critical Sobolev nonlinearities, even on the boundary, related to the lower exponent $p$, as explained in \eqref{critical_exponent}. Such critical terms make the study of compactness of the energy functional related to \eqref{problem} more intriguing, since the embeddings $\W\hookrightarrow L^{p^*}(\Omega)$ and $\W\hookrightarrow L^{p_*}(\partial\Omega)$ are not compact. We overcome these difficulties with a local analysis on a suitable closed convex subset of $\W$ combined with a truncation argument.

We point out that $p^*$ and $p_*$ are not the critical exponents to the space $\W$. Indeed, from Fan \cite{Fan-2012} we know that $\W \hookrightarrow L^{\mathcal{H}_*}(\Omega)$ is continuous while $\mathcal{H}_*$ is the Sobolev conjugate function of $\mathcal{H}$, see also Crespo-Blanco-Gasi\'nski-Harjulehto-Winkert \cite[Definition 2.18 and Proposition 2.18]{Crespo-Blanco-Gasinski-Harjulehto-Winkert-2021}. So far it is not known how $\mathcal{H}_*$ explicitly looks like in the double phase setting. For the moment, $p^*$ and $p_*$ seem to be the best exponents (probably not optimal) and only continuous (in general noncompact) embeddings from $\W \hookrightarrow \Lp{p^*}$ and $\W\hookrightarrow\Lprand{p_*}$ are available. So we call it ``types of critical growth''.

For singular double phase problems with Dirichlet boundary condition there exists only few works. Recently, Liu-Dai-Papageorgiou-Winkert \cite{Liu-Dai-Papageorgiou-Winkert-2021} studied the singular problem
\begin{equation}\label{problem10}
	\begin{aligned}
		-\divergenz\big(|\nabla u|^{p-2} \nabla u+ \mu(x) |\nabla u|^{q-2} \nabla u\big)&= a(x)u^{-\gamma}+\lambda u^{r-1}\quad && \text{in } \Omega, \\
		u&= 0 && \text{on } \partial\Omega.
	\end{aligned}
\end{equation}
Based on the fibering method along with the Nehari manifold, the existence of at least two weak solutions with different energy sign is shown, see also \cite{Crespo-Blanco-Papageorgiou-Winkert-2021} for the corresponding Neumann problem. Furthermore, under a different treatment, Chen-Ge-Wen-Cao \cite{Chen-Ge-Wen-Cao-2020} considered problems of type \eqref{problem10} and proved the existence of a weak solution having negative energy. Finally, the existence of at least one weak solution to the singular problem 
\begin{equation*}
	\begin{aligned}
		-\divergenz (A(u)) & = u^{p^{*}-1}+\lambda \l(u^{\gamma-1}+g(u)\r)\quad && \text{in } \Omega,\\
		u & > 0 &&\text{in } \Omega,\\
		u & = 0 &&\text{on } \partial \Omega,
	\end{aligned}
\end{equation*}
has been shown by the first and the third author in \cite{Farkas-Winkert-2021}. The current paper can be seen as a nontrivial extension of the one in \cite{Farkas-Winkert-2021} to the case of a nonlinear boundary condition including type of critical growth. In particular, we are able to cover the situation when $1<p<2$ and/or $1<q<2$, which has not been considered in \cite{Farkas-Winkert-2021} where $2\leq p<q$.

Also for the $p$-Laplacian or the $(q,p)$-Laplacian only few works exist involving singular terms and Neumann/Robin boundary condition. We refer to Papageorgiou-R\u{a}dulescu-Repov\v{s} \cite{Papageorgiou-Radulescu-Repovs-2020b,Papageorgiou-Radulescu-Repovs-2021} for singular homogeneous Neumann $p$-Laplace problems and for singular Robin $(q,p)$-Laplacian problems, respectively. Existences results for singular Neumann Laplace problems has been obtained by Lei \cite{Lei-2018} based on variational and perturbation methods.

Finally, existence results for double phase problems without singular term can be found in the papers of Colasuonno-Squassina \cite{Colasuonno-Squassina-2016}, El-Manouni-Marino-Winkert \cite{El-Manouni-Marino-Winkert-2020}, Fiscella \cite{Fiscella-2020}, Fiscella-Pinamonti \cite{Fiscella-Pinamonti-2020},  Gasi\'nski-Papa\-georgiou \cite{Gasinski-Papageorgiou-2019}, Gasi\'nski-Winkert \cite{Gasinski-Winkert-2020a,Gasinski-Winkert-2020b,Gasinski-Winkert-2021}, Liu-Dai \cite{Liu-Dai-2018}, Papageorgiou-R\u{a}dulescu-Repov\v{s} \cite{Papageorgiou-Radulescu-Repovs-2020a}, Perera-Squassina \cite{Perera-Squassina-2019}, Zeng-Bai-Gasi\'nski-Winkert \cite{Zeng-Bai-Gasinski-Winkert-2020, Zeng-Gasinski-Winkert-Bai-2020} and the references there\-in. For related works dealing with certain types of double phase problems we refer to the works of Bahrouni-R\u{a}dulescu-Winkert \cite{Bahrouni-Radulescu-Winkert-2020}, Barletta-Tornatore \cite{Barletta-Tornatore-2021}, Faraci-Farkas \cite{Faraci-Farkas-2015}, Papageorgiou-R\u{a}dulescu-Repov\v{s} \cite{Papageorgiou-Radulescu-Repovs-2019b}, Papageorgiou-Winkert \cite{Papageorgiou-Winkert-2019} and Zeng-Bai-Gasi\'nski-Winkert \cite{Zeng-Bai-Gasinski-Winkert-2020b}.
\section{Preliminaries}\label{section_2}

In this section we are going to mention the main facts about the Minkowski space $(\R^N,F)$ and the properties about Musielak-Orlicz Sobolev spaces.

To this end, let $F\colon \R^N\to [0,\infty)$ be a positively homogeneous Minkowski norm, that is, $F$ is a positive homogeneous function such that $F\in C^\infty(\mathbb R^N\setminus \{0\})$ and the Hessian matrix $\nabla^2 ({F^2}/{2})(x)$ is positive definite for all $x\neq 0$. We point out that the pair $(\mathbb R^N,F)$ is the simplest not necessarily reversible Finsler manifold whose flag curvature is identically zero, the geodesics are straight lines and the intrinsic distance between two points $x,y\in \mathbb R^N$ is given by
\begin{equation*}
	d_F(x,y)=F(y-x).
\end{equation*}
The pair $(\R^N,d_F)$ is a quasi-metric space and in general, it holds $d_F(x,y)\neq d_F(y,x)$. 

The so-called Randers metric is a typical example for a Minkowski norm with finite reversibility which is given by 
\begin{align*}
	F(x)=\sqrt{\langle Ax ,x \rangle}+\langle b,x \rangle,
\end{align*}
where $A$ is a  positive definite and symmetric  $(N\times N)$-type matrix and  $b=(b_i)\in \mathbb R^N$ is a fixed vector such that $\sqrt{\langle A^{-1}b,b\rangle}<1$. Note that
\begin{align*}
	r_F=\frac{1+\sqrt{\langle A^{-1}b,b\rangle}}{1-\sqrt{\langle A^{-1}b,b\rangle}}.
\end{align*}
The pair $(\mathbb R^N,F)$ is often called Randers space which describes the electromagnetic field of the physical space-time in general relativity, see Randers \cite{Randers-1941}. They are deduced as the solution of the Zermelo navigation problem.

In the next proposition we recall some basic properties of $F$, see Bao-Chern-Shen \cite[\S 1.2]{Bao-Chern-Shen-2000}.

\begin{proposition}\label{basic-properties}
	Let $F\colon\R^N\to [0,\infty)$ be a positively homogeneous Minkowski norm. Then, the following assertions hold true:
	\begin{enumerate}
		\item[\textnormal{(i)}] 
		Positivity: $F(x)>0$ for all $x\neq 0$;
		\item[\textnormal{(ii)}]
		Convexity: $F$ and $F^2$ are strictly convex;
		\item[\textnormal{(iii)}] 
		Euler's theorem: $x\cdot \nabla F(x)=F(x)$ and 
		\begin{align*}
			\nabla^2 ({F^2}/{2})({x}){x}\cdot {x}=F^2(x)\quad  \text{for all } x\in\R^N\setminus \{0\};
		\end{align*}
		\item[\textnormal{(iv)}]
		Homogeneity: $\nabla F(tx)=\nabla F(x)$ and 
		\begin{align*}
			\nabla^2 F^2(tx)=\nabla^2 F^2(x)\quad \text{for all }x\in \R^N\setminus \{0\} \text{ and for all } t> 0.
		\end{align*}
	\end{enumerate}
\end{proposition}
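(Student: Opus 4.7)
My plan is to establish each of the four properties in turn, starting with the easier ones (iii) and (iv), which follow by direct differentiation of the defining homogeneity relation $F(tx)=tF(x)$, and then deducing (i) from (iii) by a contradiction argument. Property (ii) is the genuinely hard one and I would follow the standard Finsler-geometric argument (cf.\ Bao--Chern--Shen \cite{Bao-Chern-Shen-2000}), which combines the positive definiteness of $\nabla^{2}(F^{2}/2)$ with homogeneity.

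For (iii), the first identity is Euler's theorem applied to the $1$-homogeneous function $F$: differentiating $F(tx)=tF(x)$ in $t$ at $t=1$ gives $x\cdot\nabla F(x)=F(x)$. For the second identity, apply Euler to the $2$-homogeneous function $F^{2}$ to get $x\cdot\nabla F^{2}(x)=2F^{2}(x)$, and differentiate the chain-rule consequence $\nabla F^{2}(tx)=t\,\nabla F^{2}(x)$ in $t$ at $t=1$ to obtain $\nabla^{2}F^{2}(x)\,x=\nabla F^{2}(x)$; combining both yields $\nabla^{2}(F^{2}/2)(x)\,x\cdot x=F^{2}(x)$. For (iv), I differentiate $F(tx)=tF(x)$ and $F^{2}(tx)=t^{2}F^{2}(x)$ with respect to $x$ via the chain rule: the first yields $\nabla F(tx)=\nabla F(x)$ (so $\nabla F$ is $0$-homogeneous), and differentiating the relation $\nabla F^{2}(tx)=t\,\nabla F^{2}(x)$ once more in $x$ yields $\nabla^{2}F^{2}(tx)=\nabla^{2}F^{2}(x)$.

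For (i), I argue by contradiction. If $F(x_{0})=0$ for some $x_{0}\neq 0$, positive $1$-homogeneity gives $F(tx_{0})=0$ for every $t>0$, so the smooth function $\phi(t):=(F^{2}/2)(tx_{0})$ vanishes identically on $(0,\infty)$ and in particular $\phi''(1)=0$. But by (iii) applied at the nonzero vector $x_{0}$, $\phi''(1)=x_{0}\cdot\nabla^{2}(F^{2}/2)(x_{0})\,x_{0}>0$ by the positive-definiteness hypothesis, a contradiction. Since $F$ takes values in $[0,\infty)$ by assumption, this forces $F(x)>0$ for all $x\neq 0$.

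The main obstacle is (ii). The subtlety is that $\R^{N}\setminus\{0\}$ is not convex, so pointwise positive definiteness of $\nabla^{2}(F^{2}/2)$ does not by itself yield global (strict) convexity of $F^{2}$. My plan is to split by cases on the segment $[x,y]$: when this segment avoids $0$, the function $t\mapsto F^{2}((1-t)x+ty)$ has strictly positive second derivative by positive definiteness and is therefore strictly convex in the standard sense; when the segment crosses the origin, necessarily $y=-\alpha x$ with $\alpha>0$, and an explicit computation using $F(\pm x)>0$ from (i) together with the positive $1$-homogeneity handles this remaining case. Strict convexity of $F$ then follows from that of $F^{2}$ together with the positivity established in (i) and $1$-homogeneity. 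For the complete case analysis I would refer the reader to \cite{Bao-Chern-Shen-2000}, which is exactly the source cited in the statement.
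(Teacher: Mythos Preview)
The paper does not give its own proof of this proposition: it is stated as a recollection of standard facts with the reference ``see Bao--Chern--Shen \cite[\S 1.2]{Bao-Chern-Shen-2000}'' and no further argument. Your proposal therefore goes well beyond what the paper does, supplying an actual proof where the paper only cites the literature.

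Your arguments for (iii), (iv), and (i) are correct and standard. One small remark on (i): the computation $\phi''(1)=x_0\cdot\nabla^{2}(F^{2}/2)(x_0)\,x_0$ is a direct second-derivative calculation rather than an application of (iii), so the phrase ``by (iii) applied at $x_0$'' is slightly misleading; the contradiction comes purely from positive definiteness of the Hessian. For (ii), your identification of the genuine subtlety (the domain $\R^N\setminus\{0\}$ is not convex, so one must treat separately segments through the origin, where $y=-\alpha x$) is exactly right, and deferring the remaining computation to \cite{Bao-Chern-Shen-2000} is consistent with what the paper itself does. If you wanted to make the proposal fully self-contained, the through-the-origin case reduces to checking a one-variable piecewise-quadratic inequality in $s=1-t(1+\alpha)$ involving $F(x)$ and $F(-x)$, both strictly positive by (i); this is elementary but a bit tedious, which is presumably why both you and the paper outsource it.
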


Furthermore, $\Lp{r}$ and $L^r(\Omega;\R^N)$ stand for the usual Lebesgue spaces endowed with the norm $\|\cdot\|_r$ for $1\leq r<\infty$. The corresponding Sobolev spaces are denoted by $\Wp{r}$ and $\Wpzero{r}$ equipped with the norms
\begin{align*}
	\|u\|_{1,r,F}= \|F(\nabla u) \|_{r}+\|u\|_{r} \quad\text{and}\quad 
	\|u\|_{1,r,0,F}= \|F(\nabla u) \|_{r},
\end{align*}
respectively. 

On the boundary $\partial \Omega$ of $\Omega$ we consider the $(N-1)$-dimensional Hausdorff (surface) measure $\sigma$ and denote by $\Lprand{r}$ the boundary Lebesgue space with norm $\|\cdot\|_{r,\partial\Omega}$. We know that the trace mapping $W^{1,r}(\Omega) \to \Lprand{\tilde{r}}$ is compact for $\tilde{r}<r_*$ and continuous for $\tilde{r}=r_*$, where $r_*$ is the critical exponent of $r$ on the boundary given by
\begin{align*}
	r_*=
	\begin{cases}
		\frac{(N-1)r}{N-r} &\text{if }r<N,\\
		\text{any }\ell\in(r,\infty) & \text{if }r\geq N.
	\end{cases}
\end{align*}
For simplification we will avoid the notation of the trace operator throughout the paper.

Let us now introduce the Musielak-Orlicz Sobolev spaces. For this purpose, let $\mathcal{H}\colon \Omega \times [0,\infty)\to [0,\infty)$ be the function defined by
\begin{align*}
	(x,t)\mapsto t^p+\mu(x)t^q,
\end{align*}
where \eqref{conditions-exponents} is satisfied. Then, the Musielak-Orlicz space $L^\mathcal{H}(\Omega)$ is defined by
\begin{align*}
	L^\mathcal{H}(\Omega)=\left \{u ~ \Big | ~ u\colon \Omega \to \R \text{ is measurable and } \rho_{\mathcal{H}}(u)<\infty \right \}
\end{align*}
equipped with the Luxemburg norm
\begin{align*}
	\|u\|_{\mathcal{H}} = \inf \left \{ \tau >0 : \rho_{\mathcal{H}}\left(\frac{u}{\tau}\right) \leq 1  \right \},
\end{align*}
where the modular function $\rho_{\mathcal{H}}\colon\Lp{\mathcal{H}}\to \R$ is given by
\begin{align*}
	\rho_{\mathcal{H}}(u):=\into \mathcal{H}(x,|u|)\diff x=\into \big(|u|^{p}+\mu(x)|u|^q\big)\diff x.
\end{align*}
From Colasuonno-Squassina \cite[Proposition 2.14]{Colasuonno-Squassina-2016} we know that the space $L^\mathcal{H}(\Omega)$ is a reflexive Banach space.

Furthermore, we define the seminormed space
\begin{align*}
	L^q_\mu(\Omega)=\left \{u ~ \Big | ~ u\colon \Omega \to \R \text{ is measurable and } \into \mu(x) |u|^q \diff x< \infty \right \},
\end{align*}
which is endowed with the seminorm
\begin{align*}
	\|u\|_{q,\mu} = \left(\into \mu(x) |u|^q \diff x \right)^{\frac{1}{q}}.
\end{align*}
Similarly we define $L^q_\mu(\Omega;\R^N)$ with seminorm $\| F(\,\cdot\,)\|_{q,\mu}$.

The Musielak-Orlicz Sobolev space $W^{1,\mathcal{H}}(\Omega)$ is defined by
\begin{align*}
	W^{1,\mathcal{H}}(\Omega)= \left \{u \in L^\mathcal{H}(\Omega) : F(\nabla u) \in L^{\mathcal{H}}(\Omega) \right\}
\end{align*}
equipped with the norm
\begin{align*}
	\|u\|_{1,\mathcal{H},F}= \|F(\nabla u) \|_{\mathcal{H}}+\|u\|_{\mathcal{H}}.
\end{align*}

Finally, we mention the main embedding results between Musielak-Orlicz Sobolev spaces and usual Lebesgue and Sobolev spaces. We refer to Gasi\'nski-Winkert \cite[Proposition 2.2]{Gasinski-Winkert-2021} or Crespo-Blanco-Gasi\'nski-Harjulehto-Winkert \cite[Proposition 2.17]{Crespo-Blanco-Gasinski-Harjulehto-Winkert-2021}.

\begin{proposition}\label{proposition_embeddings}
	Let \eqref{conditions-exponents} be satisfied and let $p^*$ and $p_*$ be the critical exponents to $p$, see \eqref{critical_exponent}. Then the following embeddings hold:
	\begin{enumerate}
		\item[\textnormal{(i)}]
		$\Lp{\mathcal{H}} \hookrightarrow \Lp{r}$ and $\Wp{\mathcal{H}}\hookrightarrow \Wp{r}$ are continuous for all $r\in [1,p]$;
		\item[\textnormal{(ii)}]
		$\Wp{\mathcal{H}} \hookrightarrow \Lp{r}$ is continuous for all $r \in [1,p^*]$ and compact for all $r \in [1,p^*)$;
		\item[\textnormal{(iii)}]
		$\Wp{\mathcal{H}} \hookrightarrow \Lprand{r}$ is continuous for all $r \in [1,p_*]$ and compact for all $r \in [1,p_*)$;
		\item[\textnormal{(iv)}]
		$\Lp{\mathcal{H}} \hookrightarrow L^q_\mu(\Omega)$ is continuous;
		\item[\textnormal{(v)}]
		$\Lp{q}\hookrightarrow\Lp{\mathcal{H}} $ is continuous.
	\end{enumerate}
\end{proposition}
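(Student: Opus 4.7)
The plan is to realize a weak solution of problem \eqref{problem} as a local minimizer of a truncated energy on a suitable closed convex subset of $\W$, and then to exploit the smallness of $\lambda$ so that this minimizer sits in the interior of the convex set, thereby recovering the full equation of Definition \ref{def_weak_solution}.

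First, I would tame the singular term by producing a strictly positive lower barrier $\uu\in\W$ with $\uu\geq c>0$ in $\overline{\Omega}$; this can be obtained as the unique positive solution of an auxiliary Finsler $p$-Laplace type problem such as $-\divergenz(F^{p-1}(\nabla v)\nabla F(\nabla v))+v^{p-1}=1$ in $\Omega$ with the Neumann condition $F^{p-1}(\nabla v)\nabla F(\nabla v)\cdot\nu=1$ on $\partial\Omega$, combined with a standard Moser iteration giving $\uu\in L^{\infty}(\Omega)$ and a Finsler strong maximum principle giving the uniform lower bound. Using $\uu$, I would replace the singular nonlinearity $u^{\gamma-1}$ by the bounded truncation $[\max(u,\uu)]^{\gamma-1}$, denote by $\Psi(x,u)$ its primitive in $u$, and define the truncated energy $J_\lambda\colon\W\to\R$ accordingly; this is well defined on all of $\W$ because $\uu^{\gamma-1}\in\Linf$.

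Next, on the closed convex set $K_R=\{u\in\W: u\geq\uu \text{ a.e.\ in }\Omega,\ \|u\|_{1,\mathcal{H},F}\leq R\}$, the functional $J_\lambda$ is sequentially weakly lower semicontinuous and coercive: the double phase kinetic term is convex thanks to the strict convexity of $F^2$ (Proposition \ref{basic-properties}(ii)), the subcritical contributions involving $g_1,g_2$ pass to the limit by the compact embeddings in Proposition \ref{proposition_embeddings}(ii)--(iii), and the critical terms $\|u^+\|_{p^*}^{p^*}$ and $\|u^+\|_{p_*,\partial\Omega}^{p_*}$ are handled via their convexity in $u^+$ together with the bound $\|u\|_{1,\mathcal{H},F}\leq R$, which converts them into a bounded (though noncompact) perturbation. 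Hence $J_\lambda$ attains its infimum on $K_R$ at some $u_\lambda$. Testing at $\uu+t\ph$ for a small $t>0$ and a positive $\ph\in\W$, the singular integral contributes a term of order $t^\gamma$ which, since $0<\gamma<1$, dominates all $O(t)$ and $O(t^p)$ terms and forces $J_\lambda(u_\lambda)<J_\lambda(\uu)$, so $u_\lambda\not\equiv\uu$.

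The last step is to remove the two constraints defining $K_R$. Using $(\uu-u_\lambda)^+\in\W$ as a direction in the variational inequality coming from the minimization on $K_R$, the monotonicity of the Finsler double phase operator (a consequence of the strict convexity of $F^p$ and $F^q$) yields $u_\lambda\geq\uu$ strictly a.e., so the truncation in the singular term becomes transparent: $[\max(u_\lambda,\uu)]^{\gamma-1}=u_\lambda^{\gamma-1}$. To make the norm constraint $\|u_\lambda\|_{1,\mathcal{H},F}<R$ inactive, I would fix $R$ large enough that the coercivity estimate strictly dominates $J_\lambda$ on the sphere $\|u\|_{1,\mathcal{H},F}=R$ at $\lambda=0$, then choose $\lambda_*$ so small that the $\lambda$-dependent subcritical and singular perturbations cannot push the minimizer to the sphere; for $\lambda\in(0,\lambda_*)$ the function $u_\lambda$ is then an interior critical point of $J_\lambda$ on $\W$, and testing with arbitrary $\ph\in\W$ gives exactly the weak formulation of Definition \ref{def_weak_solution}. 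The hardest point will be the simultaneous presence of the interior critical exponent $p^*$ and the boundary critical exponent $p_*$, which destroys compactness of the relevant embeddings and rules out a Palais--Smale/mountain-pass scheme; the confinement to $K_R$ sidesteps this at the price of only producing a variational inequality, and the crux is a delicate three-way balance between the Sobolev embedding constants on $\Omega$ and $\partial\Omega$, the subcritical growth constants of $g_1,g_2$, and the size of $\lambda$, ensuring simultaneously that $J_\lambda$ is coercive on $K_R$, that $\inf_{K_R}J_\lambda<0$, and that the minimizer stays strictly inside the norm ball.
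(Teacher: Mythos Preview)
Your proposal does not address the stated result at all. The statement is Proposition \ref{proposition_embeddings}, a list of continuous and compact embeddings between Musielak--Orlicz spaces, Lebesgue spaces, and boundary Lebesgue spaces; in the paper this proposition is not proved but simply quoted from Gasi\'nski--Winkert \cite{Gasinski-Winkert-2021} and Crespo-Blanco--Gasi\'nski--Harjulehto--Winkert \cite{Crespo-Blanco-Gasinski-Harjulehto-Winkert-2021}. What you have written is instead an outline for Theorem \ref{main_theorem}, the existence of a weak solution of \eqref{problem}.

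Even read as a sketch of Theorem \ref{main_theorem}, the plan has a genuine gap at the weak lower semicontinuity step. The critical contributions $-\frac{1}{p^*}\|u_+\|_{p^*}^{p^*}$ and $-\frac{1}{p_*}\|u_+\|_{p_*,\partial\Omega}^{p_*}$ enter $J_\lambda$ with a \emph{negative} sign, so convexity of $s\mapsto s^{p^*}$ gives the wrong inequality: the norms are weakly lower semicontinuous, hence their negatives are only weakly upper semicontinuous, and the bound $\|u\|_{1,\mathcal{H},F}\leq R$ does not repair this. The paper faces exactly this difficulty in Proposition \ref{proposition_semicontinuous} and resolves it by a $T_\kappa/R_\kappa$ truncation combined with the Brezis--Lieb lemma and a \emph{smallness} condition on the radius (encoded in the function $\Psi$ of \eqref{function_psi}); your outline provides no replacement for this mechanism, and taking $R$ large, as you do, runs directly against what is needed. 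A secondary issue is the barrier $\uu$: you require $\uu\geq c>0$ on $\overline\Omega$, but the boundary condition here is nonlinear Neumann-type, and a strong maximum principle yielding a uniform positive lower bound up to $\partial\Omega$ in the Finsler double phase setting is not standard and would itself need justification. The paper avoids any barrier: it minimizes the untruncated $J_\lambda$ on a small ball $B_\varrho$, shows the minimizer is nonnegative by testing with $u_-$, obtains strict positivity from the $t^{\gamma}$-blow-up of the singular term, and then recovers the Euler--Lagrange identity via Fatou's lemma and a one-sided test-function argument.
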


Let $B\colon \W\to \W^*$ be the nonlinear operator defined by
\begin{align}\label{operator_representation}
	\langle B(u),\ph\rangle_{\mathcal{H},F} :=\into \left(F^{p-1}(\nabla u)\nabla F(\nabla u) +\mu(x) F^{q-1}(\nabla u)\nabla F(\nabla u) \right)\cdot\nabla\ph \diff x,
\end{align}
where $\langle \cdot,\cdot\rangle_{\mathcal{H},F}$ is the duality pairing between $\W$ and its dual space $\W^*$. The operator $B\colon \W\to \W^*$ has the following properties, see Crespo-Blanco-Gasi\'nski-Harjulehto-Winkert \cite[Proposition 3.4(ii)]{Crespo-Blanco-Gasinski-Harjulehto-Winkert-2021}, by taking the properties of $F$ into account.

\begin{proposition}
	The operator $B$ defined by \eqref{operator_representation} is bounded, continuous and monotone (hence maximal monotone).
\end{proposition}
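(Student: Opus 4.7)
The plan is to mimic the argument of Crespo-Blanco--Gasi\'nski--Harjulehto--Winkert for the Euclidean double phase operator, replacing the Euclidean gradient vector field by the Finsler one and exploiting the structural properties of $F$ recorded in Proposition \ref{basic-properties}. The key observation that unifies all three claims is the identity
\begin{equation*}
	F^{r-1}(\xi)\nabla F(\xi)=\tfrac{1}{r}\nabla\!\left(F^{r}\right)(\xi),\qquad r\in\{p,q\},\ \xi\in\R^{N}\setminus\{0\},
\end{equation*}
which is immediate from the chain rule and Euler's theorem (Proposition \ref{basic-properties}(iii)). Thus the integrand in \eqref{operator_representation} equals $\tfrac{1}{p}\nabla(F^p)(\nabla u)\cdot\nabla\varphi+\tfrac{\mu(x)}{q}\nabla(F^q)(\nabla u)\cdot\nabla\varphi$.

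\textbf{Boundedness.} Since $\nabla F$ is positively $0$-homogeneous and smooth on $\R^{N}\setminus\{0\}$, it is uniformly bounded on the unit sphere by some constant $C_{F}$. Hence $|F^{r-1}(\xi)\nabla F(\xi)|\leq C_{F}F^{r-1}(\xi)$ for $r\in\{p,q\}$. Combining this with H\"older's inequality in $L^{\mathcal{H}}(\Omega)$ and the elementary equivalence between norm and modular convergence in Musielak--Orlicz spaces, I would estimate $|\langle B(u),\varphi\rangle_{\mathcal{H},F}|$ by a polynomial expression in $\|u\|_{1,\mathcal{H},F}$ times $\|F(\nabla\varphi)\|_{\mathcal{H}}$, which yields $\|B(u)\|_{\W^{\ast}}\leq C(1+\|u\|_{1,\mathcal{H},F}^{q-1})$ on bounded sets.

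\textbf{Continuity.} Suppose $u_n\to u$ in $\W$. Then $F(\nabla u_n)\to F(\nabla u)$ in $L^{\mathcal{H}}(\Omega)$ and, up to a subsequence, $\nabla u_n\to\nabla u$ almost everywhere. Continuity of $\nabla F$ on $\R^{N}\setminus\{0\}$ combined with the a.e.\ convergence gives the pointwise convergence of the integrands $F^{r-1}(\nabla u_n)\nabla F(\nabla u_n)\to F^{r-1}(\nabla u)\nabla F(\nabla u)$ a.e.\ (the set where $\nabla u=0$ causes no problem since there the integrand vanishes). The modular convergence of $F(\nabla u_n)$ in $L^{\mathcal{H}}(\Omega)$ together with the pointwise bound above provides the equi-integrability needed to apply Vitali's theorem, and the Urysohn subsequence principle promotes the convergence from subsequences to the full sequence; then H\"older again transfers this to $B(u_n)\to B(u)$ in $\W^{\ast}$.

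\textbf{Monotonicity.} Since $F$ is convex and nonnegative, and $t\mapsto t^{r}$ is convex and nondecreasing on $[0,\infty)$ for $r\geq 1$, the composition $F^{r}$ is convex on $\R^{N}$ for $r\in\{p,q\}$. Consequently, the vector field $\xi\mapsto\nabla(F^{r})(\xi)$ is monotone as the gradient of a convex function:
\begin{equation*}
	\bigl(\nabla(F^{r})(\xi)-\nabla(F^{r})(\eta)\bigr)\cdot(\xi-\eta)\geq 0\qquad\text{for all }\xi,\eta\in\R^{N}.
\end{equation*}
Multiplying the $q$-term by $\mu(x)\geq 0$ preserves the sign, and integrating over $\Omega$ yields $\langle B(u)-B(v),u-v\rangle_{\mathcal{H},F}\geq 0$. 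Maximal monotonicity is then a standard consequence of monotonicity together with continuity (hemicontinuity being sufficient) on the reflexive Banach space $\W$, via Minty's theorem.

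\textbf{Main obstacle.} The one nontrivial point is to make sure the substitution $\nabla u\mapsto F^{r-1}(\nabla u)\nabla F(\nabla u)$ behaves well on the set where $\nabla u$ vanishes, since $\nabla F$ is only defined on $\R^{N}\setminus\{0\}$. The integrand is still meaningful because $F^{r-1}(\xi)\nabla F(\xi)$ extends continuously to $0$ (by $r>1$ and the bound $C_{F}F^{r-1}$), and I would make this extension explicit at the outset so that the a.e.\ convergence argument in the continuity step, and the pointwise monotonicity inequality in the monotonicity step, apply uniformly on all of $\R^{N}$.
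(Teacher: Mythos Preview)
Your proposal is correct and follows exactly the route the paper indicates: the paper gives no proof of its own but simply refers to \cite[Proposition 3.4(ii)]{Crespo-Blanco-Gasinski-Harjulehto-Winkert-2021} ``by taking the properties of $F$ into account'', and your argument is precisely that adaptation---using the $0$-homogeneity and boundedness of $\nabla F$ for the growth estimates, and the convexity of $F^{r}$ (hence monotonicity of its gradient) for the monotonicity step. One small remark: the identity $F^{r-1}\nabla F=\tfrac{1}{r}\nabla(F^{r})$ is pure chain rule and does not require Euler's theorem, and in the boundedness step you are implicitly using that $F$ is equivalent to the Euclidean norm (so that $|\nabla\varphi|$ is controlled by $F(\nabla\varphi)$), which is worth stating once.
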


\section{Proof of the main result}

Let $J_\lambda\colon \W\to \R $ be the functional given by
\begin{align*}
	J_\lambda(u)&= \frac{1}{p}\| F(\nabla u)\|_p^p+\frac{1}{q}\| F(\nabla u)\|_{q,\mu}^q+\frac{1}{p}\|u\|_p^p+\frac{1}{q}\|u\|_{q,\mu}^q-\frac{1}{p^{*}}\|u_+\|_{p^*}^{p^{*}}\\
	&\quad  -\frac{\lambda}{\gamma}\int_{\Omega} \l(u_+\r)^\gamma\diff x-\lambda \int_{\Omega}G_1\l(x,u_+\r)\diff x-\frac{1}{p_{*}}\|u_+\|_{p_*,\partial\Omega}^{p_*}-\int_{\partial \Omega}G_2\l(x,u_+\r)\diff \sigma,
\end{align*}
where $u_{\pm}=\max(\pm u,0)$ and
\begin{align*}
	G_1(x,s)=\int^s_0 g_1(x,t)\diff t \quad\text{as well as}\quad
	G_2(x,s)=\int^s_0g_2(x,t)\diff t.
\end{align*}
Due to the presence of the singular term, it is easy to see that $J_\lambda$ is not $C^1$.

Throughout the paper we denote by $c_{p^{*}}$ and $c_{p_{*}}$ the inverses of the Sobolev embedding constants of $W^{1,p}(\Omega)\hookrightarrow L^{p^{*}}(\Omega)$ and $W^{1,p}(\Omega)\hookrightarrow L^{p_{*}}(\partial\Omega)$, respectively.
This means, in particular,
\begin{equation}\label{Sobolevineq}
	(c_{p^{*}})^{-1}=\inf_{\underset{u\neq0}{u\in W^{1,p}(\Omega),}}\frac{\|u\|_{1,p,F}}{\|u\|_{p^{*}}}
	\quad\text{and}\quad
	(c_{p_{*}})^{-1}=\inf_{\underset{u\neq0}{u\in W^{1,p}(\Omega),}}\frac{\|u\|_{1,p,F}}{\|u\|_{p_{*},\partial \Omega}}.
\end{equation}

Moreover, we define the function $\Psi\colon (0,\infty)\to \mathbb{R}$ given by
\begin{align}\label{function_psi}
	\Psi(s):=\frac{1}{p2^{p-1}r_F^p}-\frac{2^{p^*-1}c_{p^{*}}^{p^*}}{p^{*}}s^{p^*-p}-\frac{2^{p_*-1}c_{p_{*}}^{p_*}}{p_{*}}s^{p_*-p},
\end{align}
 where $\displaystyle r_F=\max_{w\neq 0}\frac{F(-w)}{F(w)}$ is finite by \textnormal{(H)(iii)}. Since $\Psi$ is strictly decreasing, we know there exists a unique $\varrho^*>0$ such that $\Psi(\varrho^*)=0$. In addition, $\Psi(s)\geq 0$ for all $s\in (0,\varrho^*)$. 
 
 We start with the study of the functional $I\colon \W\to \R $ given by
\begin{align*}
	I(u)&= \frac{1}{p}\| F(\nabla u)\|_p^p+\frac{1}{q}\| F(\nabla u)\|_{q,\mu}^q+\frac{1}{p}\|u\|_p^p+\frac{1}{q}\|u\|_{q,\mu}^q\\
	&\quad -\frac{1}{p^{*}}\|u\|_{p^*}^{p^{*}}-\frac{1}{p_{*}}\|u\|_{p_*,\partial\Omega}^{p_*}.
\end{align*}

The next proposition shows the  sequentially weakly lower semicontinuity of the functional  $I\colon \W\to \R $ on closed convex subsets of $\W$.

\begin{proposition}\label{proposition_semicontinuous}
	Let hypotheses \textnormal{(H)} be satisfied. For every $\varrho \in (0,\varrho^*)$ the restriction  of $I$ to the closed convex set $B_{\varrho}$ which is given by
	\begin{align*}
		B_{\varrho}:=\Big\{u\in \W \, : \, \|u\|_{1,p,F}\leq\varrho\Big\},
	\end{align*}
	is sequentially weakly lower semicontinuous.
\end{proposition}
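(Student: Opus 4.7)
Let $(u_n)\subset B_\varrho$ with $u_n\weak u$ in $\W$; the goal is $\liminf_n I(u_n)\ge I(u)$. The plan is to combine weak lower semicontinuity of the convex parts, strong convergence of the subcritical parts by compactness, a Br\'ezis--Lieb decomposition for the two critical pieces, and a closing estimate that exploits the threshold $\varrho<\varrho^*$.

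By Proposition~\ref{proposition_embeddings}(ii)--(iii), the embeddings $\W\hookrightarrow\Lp{r}$ for $r\in[1,p^*)$ and $\W\hookrightarrow\Lprand{r}$ for $r\in[1,p_*)$ are compact. Since $q<p^*$ and $\mu\in\Linf$, passing to a subsequence (not relabeled) gives $u_n\to u$ in $\Lp{p}$ and $\Lp{q}$ and a.e.\ in $\Omega$ and on $\partial\Omega$, so in particular $\|u_n\|_p^p\to\|u\|_p^p$ and $\|u_n\|_{q,\mu}^q\to\|u\|_{q,\mu}^q$. For the $F$-gradient contributions, the convexity of $F$ (Proposition~\ref{basic-properties}(ii)) together with $F\ge0$ makes $w\mapsto F(w)^p$ and $w\mapsto\mu(x)F(w)^q$ convex in $w$, so the maps $u\mapsto\|F(\nabla u)\|_p^p$ and $u\mapsto\|F(\nabla u)\|_{q,\mu}^q$ are convex and strongly continuous on $\W$, hence weakly lower semicontinuous.

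For the two critical terms (the genuine difficulty, since $\W\hookrightarrow\Lp{p^*}$ and $\W\hookrightarrow\Lprand{p_*}$ are only continuous) I would apply the Br\'ezis--Lieb lemma using the a.e.\ convergence above together with the uniform boundedness of $(u_n)$ in $\Lp{p^*}$ and $\Lprand{p_*}$ furnished by the continuous critical embeddings and the constraint $u_n\in B_\varrho$:
\[
\|u_n\|_{p^*}^{p^*}=\|u\|_{p^*}^{p^*}+\|u_n-u\|_{p^*}^{p^*}+o(1), \quad
\|u_n\|_{p_*,\partial\Omega}^{p_*}=\|u\|_{p_*,\partial\Omega}^{p_*}+\|u_n-u\|_{p_*,\partial\Omega}^{p_*}+o(1).
\]
Assembling all the pieces,
\[
\liminf_n I(u_n)-I(u)\ge -\limsup_n\Big[\tfrac{1}{p^*}\|u_n-u\|_{p^*}^{p^*}+\tfrac{1}{p_*}\|u_n-u\|_{p_*,\partial\Omega}^{p_*}\Big].
\]

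The main obstacle is to show that the right-hand side is non-negative. I would bound each critical residual by the Sobolev inequalities \eqref{Sobolevineq}, obtaining $\|u_n-u\|_{p^*}^{p^*}\le c_{p^*}^{p^*}\|u_n-u\|_{1,p,F}^{p^*}$ and $\|u_n-u\|_{p_*,\partial\Omega}^{p_*}\le c_{p_*}^{p_*}\|u_n-u\|_{1,p,F}^{p_*}$. Then I would control $\|u_n-u\|_{1,p,F}$ via the quasi-triangle inequality $F(\xi-\eta)\le F(\xi)+r_F F(\eta)$ (valid by convexity of $F$ and finite reversibility from \textnormal{(H)(iii)}), the elementary bound $(a+b)^p\le 2^{p-1}(a^p+b^p)$, and the uniform bounds $\|u_n\|_{1,p,F}\le\varrho$ together with $\|u\|_{1,p,F}\le\varrho$ (the latter because $B_\varrho$ is convex and norm-closed, hence weakly closed). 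Combined with the Jensen-type estimate $\frac{1}{p}\|u_n\|_p^p+\frac{1}{p}\|F(\nabla u_n)\|_p^p\ge\frac{1}{p\cdot 2^{p-1}}\|u_n\|_{1,p,F}^p$, the condition $\varrho<\varrho^*$---equivalently $\Psi(\varrho)>0$---is precisely the calibration that forces this positive contribution to dominate the critical residuals, yielding $\liminf_n I(u_n)\ge I(u)$.
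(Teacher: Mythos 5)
Your reduction step loses exactly the quantity the argument needs. After using qualitative weak lower semicontinuity of $u\mapsto\|F(\nabla u)\|_p^p$, $u\mapsto\|F(\nabla u)\|_{q,\mu}^q$ and strong convergence of the subcritical terms, your assembled estimate is $\liminf_n\left(I(u_n)-I(u)\right)\ge-\limsup_n\bigl[\tfrac{1}{p^*}\|u_n-u\|_{p^*}^{p^*}+\tfrac{1}{p_*}\|u_n-u\|_{p_*,\partial\Omega}^{p_*}\bigr]$, whose right-hand side is $\le 0$ and in general strictly negative, since the critical residuals need not vanish (that is the whole point of the noncompact embeddings). The subsequent appeal to $\Psi(\varrho)\ge0$ cannot repair this, because the positive term that $\Psi$ is meant to calibrate, namely a multiple of $\|u_n-u\|_{1,p,F}^{p}$, never appears in your lower bound: weak lower semicontinuity of the gradient terms only yields ``$\ge 0$'', and the Jensen-type estimate you invoke concerns $\|u_n\|_{1,p,F}$ rather than the difference $u_n-u$, while those terms have already been consumed in the first step. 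What is required is a quantitative Br\'ezis--Lieb/Clarkson-type splitting of the convex terms themselves, as in the paper's inequality \eqref{eq:4}, $F^r(w_1)-F^r(w_2)\ge\frac{1}{2^{r-1}r_F^{r}}F^r(w_1-w_2)-2F^r(w_2)$, applied so that $I(u_n)-I(u)$ itself contains $\frac{1}{p2^{p-1}r_F^p}$ times the $\|\cdot\|_{1,p,F}^p$-norm of a difference, which can then be played off against the critical residuals through \eqref{Sobolevineq} and \eqref{function_psi}.

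Even granting such a splitting, your norm bookkeeping does not cover the stated range $\varrho\in(0,\varrho^*)$. From $\|u_n\|_{1,p,F}\le\varrho$, $\|u\|_{1,p,F}\le\varrho$ and the quasi-triangle inequality one only gets $\|u_n-u\|_{1,p,F}\le(1+r_F)\varrho$, which is at least $2\varrho$ since $r_F\ge1$; for $\varrho$ close to $\varrho^*$ the function $\Psi$ evaluated at this value may be negative, so the calibration fails. Moreover, the splitting inequality produces the fixed negative error $-2\|F(\nabla u)\|_p^p$ (and its $q,\mu$ analogue), which does not vanish. Both obstructions are precisely what the paper's truncations $T_\kappa$, $R_\kappa$ (with $T_\kappa(s)+R_\kappa(s)=s$) remove: the $T_\kappa$-parts converge strongly by compactness and dominated convergence, while for the $R_\kappa$-parts one has $\|R_\kappa(u_n)\|_{1,p,F}\le\|u_n\|_{1,p,F}\le\varrho$ and $\|F(\nabla (R_\kappa(u)))\|_p,\ \|R_\kappa(u)\|_p\to0$ as $\kappa\to\infty$, so the relevant difference has norm at most $\varrho+o_\kappa(1)<\varrho^*$ and the splitting error becomes negligible. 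Your sketch offers no substitute for this truncation device, so as written it could at best prove the proposition for $\varrho$ below roughly $\varrho^*/(1+r_F)$, not for every $\varrho\in(0,\varrho^*)$ as claimed.
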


\begin{proof} 
	Let $\varrho \in (0,\varrho^*)$ and let $\{u_{n}\}_{n\in\N}\subseteq B_{\varrho}$ be such that $u_{n}\weak u$ in $\W$. We are going to prove that
	\begin{align*}
		\liminf_{n\to\infty}\left(I(u_n)-I(u)\right)\geq 0.
	\end{align*}
	
	For $\kappa \geq 1$ we consider the following truncation functions $T_\kappa,R_\kappa\colon\R\to \R$ given by 
	\begin{align*}
		T_\kappa(s)&=
		\begin{cases}
			-\kappa&\text{if } s<-\kappa,\\
			s &\text{if } -\kappa \leq s \leq\kappa,\\
			\kappa &\text{if } s>\kappa,
		\end{cases}\qquad
		R_\kappa(s)=
		\begin{cases}
			s+\kappa &\text{if } s<-\kappa,\\
			0 &\text{if } -\kappa \leq s \leq \kappa,\\
			s-\kappa &\text{if } s>\kappa.
		\end{cases}
	\end{align*}
	Note that $T_\kappa(s)+R_\kappa(s)=s$ for all $s\in\R$.
	
	First,  we observe that
	\begin{align}\label{eq:1}
		\begin{split}
			\|F(\nabla u)\|_p^p
			&=\int_{\{|u|\leq k\}} F^p(\nabla u)\diff x+\int_{\{|u|> k\}} F^p(\nabla u)\diff x\\ &=\int_{\{|u|\leq k\}} F^p(\nabla (T_\kappa(u)))\diff x+\int_{\{|u|> k\}} F^p(\nabla (R_\kappa(u)))\diff x\\ 
			&=\|F(\nabla (T_\kappa(u)))\|_p^p+\|F(\nabla (R_\kappa(u)))\|_p^p.
		\end{split}
	\end{align}
	The same argument leads to
	\begin{align}\label{eq:2}
		\|F(\nabla u)\|_{q,\mu}^q
		=\|F(\nabla (T_\kappa(u)))\|_{q,\mu}^q+ \|F(\nabla (R_\kappa(u)))\|_{q,\mu}^q.
	\end{align}
	
	Since $\|\,\cdot\,\|_p$ is sequentially weakly lower semicontinuous and considering that $F(\nabla (T_\kappa(u_n)))\weak F(\nabla (T_\kappa(u)))$ in $L^q_\mu(\Omega)$ due to the weak convergence of $u_n\weak u$ in $\W$ we have for every $\kappa\geq 1$ 
	\begin{align}
		\begin{split}\label{eq:3}
			\liminf_{n \to \infty}\left(\frac{1}{p}\|F(\nabla (T_\kappa(u_n)))\|_p^p-\frac{1}{p} \|F(\nabla (T_\kappa(u)))\|_p^p\right)&\geq 0,\\
			\lim_{n \to \infty}\left(\frac{1}{q}\|F(\nabla (T_\kappa(u_n)))\|_{q,\mu}^q-\frac{1}{q}\| F(\nabla (T_\kappa(u)))\|_{q,\mu}^q \right)&=0.
		\end{split}	
	\end{align}

	Applying the triangle inequality for the Minkowski norm $F$, see Bao-Chern-Shen \cite[Theorem 1.2.2]{Bao-Chern-Shen-2000}, along with the convexity of the function $s\mapsto s^r$, $r>1$, we get the following inequality 
	\begin{equation}\label{eq:4}
		\frac{1}{2^{r-1}r_F^r}F^r(w_1-w_2)-2F^r(w_2)\leq F^r(w_1)-F^r(w_2)\quad\text{for all } w_1,w_2\in \R^N.
	\end{equation}
	From \eqref{eq:4} by taking $w_1=\nabla( R_\kappa(u_n))$ and $w_2=\nabla (R_\kappa(u))$, respectively, we get  
	\begin{align}\label{eq:5}
		\begin{split}
			&\|F (\nabla (R_\kappa(u_n)))\|_p^p-\|F(\nabla (R_\kappa(u)))\|_p^p\\
			&\geq \frac{1}{2^{p-1}r_F^p} \|F(\nabla (R_\kappa(u_n))-\nabla (R_\kappa(u)))\|_p^p-2 \|F(\nabla (R_\kappa(u)))\|_p^p.\\
			&\|F(\nabla (R_\kappa(u_n)))\|_{q,\mu}^q-\| F(\nabla (R_\kappa(u)))\|_{q,\mu}^q\\
			&\geq\frac{1}{2^{q-1}r_F^q} \|F(\nabla (R_\kappa(u_n))-\nabla (R_\kappa(u)))\|_{q,\mu}^q -2 \| F(\nabla (R_\kappa(u)))\|_{q,\mu}^q.
		\end{split}
	\end{align}
	On the other hand, by the Brezis-Lieb lemma, see, for example, Papageorgiou-Winkert \cite[Lemma 4.1.22]{Papageorgiou-Winkert-2018}, we have
	\begin{align}\label{eq:6}
		\begin{split}
			\liminf_{n\to\infty}\l(\|u_n\|_{p^*}^{p^*}-\|u\|_{p^*}^{p^*}\r)&=\liminf_{n\to\infty}\|u_n-u\|_{p^*}^{p^*},\\
			\liminf_{n\to\infty}\left(\|u_n\|_{p_*,\partial\Omega}^{p_*}-\|u\|_{p_*,\partial\Omega}^{p_*}\right)&=\liminf_{n\to\infty}\|u_n-u\|_{p_*,\partial\Omega}^{p_*}.
		\end{split}
	\end{align}

\vspace{0.2cm}
\noindent{\bf Claim:} $\|h\|_p^p \geq \|R_\kappa(h)\|_p^p$ for all $h \in \W$ and for all $\kappa\geq1$.

	First, we have
	\begin{align}\label{aux_1}
		\begin{split}
			\|h\|_p^p 
			& =\|T_\kappa(h)+R_\kappa(h)\|_p^p\\
			& =\int_{\{h<-\kappa\}} |-\kappa+R_\kappa(h)|^p\diff x+\int_{\{|h|\leq \kappa\}} |u+R_\kappa(h)|^p\diff x\\
			&\quad + \int_{\{h>\kappa\}} |\kappa+R_\kappa(h)|^p\diff x\\
			&\geq \int_{\{h<-\kappa\}} |-\kappa+R_\kappa(h)|^p\diff x+\int_{\{h>\kappa\}} |R_\kappa(h)|^p\diff x.
		\end{split}
	\end{align}
	Applying the inequality
	\begin{align*}
		|w_2|^p>|w_1|^p+p|w_1|^{p-2}w_1(w_2-w_1)\quad\text{for all }w_1, 	w_2 \in\R^N,
	\end{align*}
	with $w_2=R_\kappa(h)-\kappa$ and $w_1=R_\kappa(h)$, we get
	\begin{align}\label{aux_2}
		\begin{split}
			&\int_{\{h<-\kappa\}} |-\kappa+R_\kappa(h)|^p\diff x\\
			& \geq\int_{\{h<-\kappa\}}\l[ |R_\kappa(h)|^p+p |R_\kappa(h)|^{p-2}R_\kappa(h)\cdot (-\kappa)\r]\diff x\\
			& \geq \int_{\{h<-\kappa\}}|R_\kappa(h)|^p\diff x
		\end{split}	
	\end{align}
	since $R_\kappa(h)<0$ if $h<-\kappa$. Combining \eqref{aux_1} and \eqref{aux_2} leads to
	\begin{align*}
		\|u\|_p^p 
		&\geq \int_{\{h<-\kappa\}}|R_\kappa(h)|^p\diff x+\int_{\{h>\kappa\}} |R_\kappa(h)|^p\diff x=\|R_\kappa(h)\|_p^p
	\end{align*}
	because $R_\kappa(h)=0$ if $|h| \leq \kappa$. This proves the Claim.
	
	Thus, we may apply the Brezis-Lieb lemma along with the Claim in order to obtain 
	\begin{align}\label{eq:7}
		\begin{split}
			\liminf_{n\to \infty }\left(\|u_n\|_p^p-\|u\|_p^p\right)
			&=\liminf_{n\to \infty }\|u_n-u\|_p^p\\
			&\geq \liminf_{n\to \infty }\|R_\kappa(u_n)-R_\kappa(u)\|_p^p\\ &\geq \frac{1}{2^{p-1}r_F^p}\liminf_{n\to \infty } \|R_\kappa(u_n)-R_\kappa(u)\|_p^p
		\end{split}
	\end{align}
	since $r_F \geq 1$ and so $2^{p-1}r_F^p \geq 1$.
	
	Note that 
	\begin{align}\label{eq:8}
		\begin{split}
			\|F(\nabla (R_\kappa(u)))\|_p^p &\to 0 \quad\text{as }\kappa\to\infty,\\\
			\|F(\nabla(R_\kappa(u)))\|_{q,\mu}^q&\to 0 \quad\text{as }\kappa\to\infty,\\
			\|u_n\|_{q,\mu}^q &\to \|u\|_{q,\mu}^q\quad\text{as }n\to\infty.
		\end{split}	
	\end{align}
	The last convergence in \eqref{eq:8} follows from Proposition \ref{proposition_embeddings} (ii) since $q<p^*$ and due to the boundedness of $\mu(\cdot)$, as given in \eqref{conditions-exponents}.
	
	Hence, for $\kappa$ large enough, taking \eqref{eq:1}, \eqref{eq:2}, \eqref{eq:3}, \eqref{eq:5}, \eqref{eq:6}, \eqref{eq:7} and \eqref{eq:8} into account, we have that 
	\begin{align}\label{eq:9}
		\begin{split}
			&\liminf_{n\to\infty}\left(I(u_{n})-I(u)\right)\\
			& \geq\liminf_{n\to\infty}\left(\frac{1}{p2^{p-1}r_F^p}\|R_\kappa(u_{n})-R_\kappa(u)\|_{1,p,F}^{p}\r.\\
			&\l.\qquad\qquad\quad -\frac{1}{p^{*}}\|u_n-u\|_{p^*}^{p^*}-\frac{1}{p_{*}}\|u_n-u\|_{p_*,\partial\Omega}^{p_*}\r).
		\end{split}
	\end{align}
	We observe that
	\begin{align}\label{eq:10}
		\begin{split}
			&\|u_n-u\|_{p^*}^{p^*}\\
			&\leq 2^{p^*-1}\|T_\kappa(u_{n})-T_\kappa(u)\|_{p^*}^{p^*}+2^{p^*-1}\|R_\kappa(u_{n})-R_\kappa(u)\|_{p^*}^{p^*}, \\ 
			&\|u_n-u\|_{p_*,\partial\Omega}^{p_*}\\
			&\leq 2^{p_*-1}\|T_\kappa(u_{n})-T_\kappa(u)\|_{p_*,\partial\Omega}^{p_*}+2^{p_*-1}\|R_\kappa(u_{n})-R_\kappa(u)\|_{p_*,\partial\Omega}^{p_*}.
		\end{split}
	\end{align}
	By Lebesgue's dominated convergence theorem we get that 
	\begin{align}\label{eq:11}
		\begin{split}
			&\lim_{n \to \infty} \|T_\kappa(u_{n})-T_\kappa(u)\|_{p^*}^{p^*}= 0\quad\text{and}\quad\lim_{n \to \infty} \|T_\kappa(u_{n})-T_\kappa(u)\|_{p_*,\partial\Omega}^{p_*}= 0.
		\end{split}
	\end{align}
	Finally, combining \eqref{eq:9}, \eqref{eq:10} and \eqref{eq:11} we arrive at
	\begin{align*}
		&\liminf_{n\to\infty}\left(I(u_{n})-I(u)\right)\\ &\geq\liminf_{n\to\infty}\Bigg(\frac{1}{p2^{p-1}r_F^p}\|R_\kappa(u_{n})-R_\kappa(u)\|_{1,p,F}^{p}\\
		&\qquad\qquad\quad -\frac{2^{p^*-1}}{p^{*}}\|R_\kappa(u_{n})-R_\kappa(u)\|_{p^*}^{p^*}-\frac{2^{p_*-1}}{p_{*}}\|R_\kappa(u_{n})-R_\kappa(u)\|_{p_*,\partial\Omega}^{p_*} \Bigg).
	\end{align*}
	Using this along with \eqref{Sobolevineq} and the fact that $\psi(s)\geq 0$ for all $s\in (0,\varrho^*)$ (see \eqref{function_psi}), it follows that
	\begin{align*}
		&\liminf_{n\to\infty}\left(I(u_{n})-I(u)\right)\\ &\geq\liminf_{n\to\infty}\Bigg(\frac{1}{p2^{p-1}r_F^p}\|R_\kappa(u_{n})-R_\kappa(u)\|_{1,p,F}^{p}\\
		&\qquad\qquad \quad -\frac{2^{p^*-1}c_{p^{*}}^{p^*}}{p^{*}}\|R_\kappa(u_n)-R_\kappa(u)\|_{1,p,F}^{p^*}\\
		&\qquad\qquad \quad
		-\frac{2^{p_*-1}c_{p_{*}}^{p_*}}{p_{*}}\|R_\kappa(u_n)-R_\kappa(u)\|_{1,p,F}^{p_*} \Bigg)\\
		&\geq \liminf_{n\to\infty}\Bigg(\|R_\kappa(u_{n})-R_\kappa(u)\|_{1,p,F}^{p}\Psi(\varrho)\Bigg)\geq 0,
	\end{align*}
	which proves the assertion of the proposition. 
\end{proof}

Taking into account the assumption \textnormal{(H)(ii)}  together with the compact embeddings $\W$ $\hookrightarrow$ $L^{r_1}(\Omega)$ for $r_1<p^*$ and $\W\hookrightarrow L^{r_2}(\partial \Omega)$ for $r_2<p_*$, see Proposition \ref{proposition_embeddings} (ii), (iii), it is quite standard to prove that the functional 
\begin{align*}
	u\mapsto \frac{\lambda}{\gamma}\int _{\Omega}\l(u_+\r)^{\gamma}\diff x+\lambda\int _{\Omega}G(x,u_+)\diff x+ \int _{\partial \Omega}G_2(x,u_+)\diff \sigma,
\end{align*}
is sequentially weakly lower semicontinuous on $\W$ for every $\lambda>0$. This fact along with Proposition \ref{proposition_semicontinuous} leads to the following corollary.

\begin{corollary}\label{corollary_semicontinuous}
	Let hypotheses \textnormal{(H)} be satisfied. For every $\lambda>0$ and for every $\varrho \in (0,\varrho^*)$ the restriction  of $J_\lambda$ to the closed convex set $B_{\varrho}$ is sequentially weakly lower semicontinuous.
\end{corollary}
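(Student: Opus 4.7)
The plan is to decompose $J_\lambda$ into a sum of sequentially weakly lower semicontinuous functionals on $B_\varrho$. Using the identity $\|u\|_r^r=\|u_+\|_r^r+\|u_-\|_r^r$ on both $\Omega$ and $\partial\Omega$, I write
\[
J_\lambda(u)=I(u)+\frac{1}{p^{*}}\|u_-\|_{p^{*}}^{p^{*}}+\frac{1}{p_{*}}\|u_-\|_{p_{*},\partial\Omega}^{p_{*}}-\Phi(u),
\]
where $\Phi(u):=\frac{\lambda}{\gamma}\into(u_+)^{\gamma}\diff x+\lambda\into G_1(x,u_+)\diff x+\intor G_2(x,u_+)\diff\sigma$ is the functional displayed in the paragraph preceding the corollary. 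Since sequential weak lower semicontinuity is preserved under finite sums, it is enough to verify the property for each of these four pieces.

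The first piece, $I$ restricted to $B_\varrho$, is s.w.l.s.c.\ by Proposition~\ref{proposition_semicontinuous}. For the two middle terms, the map $s\mapsto s_-=(-s)_+$ is convex, and composition with the convex nondecreasing $t\mapsto t^r$ on $[0,\infty)$ makes $u\mapsto\|u_-\|_{p^{*}}^{p^{*}}$ convex and continuous on $L^{p^{*}}(\Omega)$. The continuous embedding $\W\hookrightarrow L^{p^{*}}(\Omega)$ from Proposition~\ref{proposition_embeddings}(ii) then yields sequential weak lower semicontinuity on $\W$; the boundary term is handled analogously via Proposition~\ref{proposition_embeddings}(iii).

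The main step is to establish the s.w.l.s.c.\ of $-\Phi$, which in fact requires $\Phi$ to be sequentially weakly \emph{continuous} --- slightly stronger than what is formally claimed in the paragraph preceding the corollary, but obtainable by the very same argument. For $u_n\weak u$ in $\W$, the compactness part of Proposition~\ref{proposition_embeddings}(ii),(iii) together with the strict subcriticality inequalities $\nu_1<p^{*}$, $\theta_1<p<p^{*}$ and $\nu_2<p_{*}$ from (H)(ii) yields strong convergence $u_n\to u$ in $L^{\nu_1}(\Omega)$, $L^{\theta_1}(\Omega)$ and $L^{\nu_2}(\partial\Omega)$; since $s\mapsto s_+$ is $1$-Lipschitz the same convergences hold for $(u_n)_+$. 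The growth bounds in (H)(ii) make the associated Nemytskii operators $u\mapsto G_i(\,\cdot\,,u_+)$ continuous from these Lebesgue spaces into $L^1$, so the $G_1$ and $G_2$ integrals pass to the limit. For the singular term I extract a subsequence with $(u_n)_+\to u_+$ almost everywhere in $\Omega$, dominated by an $L^{r_1/\gamma}\subset L^1(\Omega)$ function obtained from the strong convergence in $L^{r_1}(\Omega)$ for any $r_1\in[1,p^{*})$, and Lebesgue's dominated convergence theorem gives $\into((u_n)_+)^{\gamma}\diff x\to\into(u_+)^{\gamma}\diff x$; the standard subsequence principle then promotes this to convergence of the whole sequence. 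Summing the four s.w.l.s.c.\ contributions proves the corollary. The only genuinely delicate point is this upgrade from lower semicontinuity to full sequential weak continuity of $\Phi$, which is exactly what the strict subcriticality conditions in (H)(ii) are designed to allow.
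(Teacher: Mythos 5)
Your proof is correct and follows essentially the same route as the paper, which disposes of the corollary by combining Proposition \ref{proposition_semicontinuous} with the ``standard'' sequential weak continuity of the singular and subcritical potential terms coming from the compact embeddings of Proposition \ref{proposition_embeddings}(ii),(iii). The additional care you take --- splitting off the convex, norm-continuous pieces $\frac{1}{p^{*}}\|u_-\|_{p^{*}}^{p^{*}}$ and $\frac{1}{p_{*}}\|u_-\|_{p_{*},\partial\Omega}^{p_{*}}$ to reconcile $I$ (which involves $\|u\|_{p^{*}}^{p^{*}}$, $\|u\|_{p_{*},\partial\Omega}^{p_{*}}$) with $J_\lambda$ (which involves the positive parts), and noting that what is actually needed is sequential weak \emph{continuity} of the perturbation functional rather than the lower semicontinuity stated before the corollary --- merely makes explicit details the paper leaves to the reader.
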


Now we are going to prove Theorem \ref{main_theorem}. For this purpose, we introduce the functionals $I_1\colon\W\to\R$ and $I_2\colon \Lp{\mathcal{H}}\to\R$ given by
\begin{align*}
	{I_1}(u)&=-\frac{1}{q}\|F(\nabla u)\|_{q,\,u}^q-\frac{1}{q}\|u\|_{q,\mu}^q+\frac{1}{p^{*}}\|u_+\|_{p^*}^{p^*}+\frac{\lambda}{\gamma}\int _{\Omega}\l(u_+\r)^{\gamma}\diff x\\
	&\quad +\lambda \int _{\Omega}G_1(x,u_+)\diff x+\frac{1}{p_{*}}\|u_+\|_{p_*,\partial\Omega}^{p_*}+\int _{\partial \Omega}G_2(x,u_+)\diff \sigma
\end{align*}
and 
\begin{align*} 
	I_2(u)
	&=\frac{1}{p^{*}}\|u_+\|_{p^*}^{p^*}+\frac{\lambda}{\gamma}\int _{\Omega}\l(u_+\r)^{\gamma}\diff x+\lambda\int _{\Omega}G(x,u_+)\diff x\\
	&\quad +\frac{1}{p_{*}}\|u_+\|_{p_*,\partial\Omega}^{p_*}+ \int _{\partial \Omega}G_2(x,u_+)\diff \sigma.
\end{align*}

\begin{proof}[Proof of Theorem \ref{main_theorem}] 
	Let $\lambda>0$ and $\varrho \in (0,\varrho^*)$ be as in Corollary \ref{corollary_semicontinuous}. First we define 
	\begin{align*}
		\varphi_{\lambda}(\varrho):=\inf_{\|u\|_{1,p,F}<\varrho}\frac{\sup_{B_{\varrho}}I_1-I_1(u)}{\varrho^{p}-\|u\|_{1,p,F}^{p}}
		\quad\text{and}\quad
		\psi_{\lambda}(\varrho):=\sup_{B_{\varrho}}I_1.
	\end{align*}
	
	\vspace{0.2cm}
	\noindent{\bf Claim:} {\em There exist $\lambda$, $\varrho>0$ small enough such that}
	\begin{equation}
		\varphi_{\lambda}(\varrho)<\frac{1}{p}.\label{min}
	\end{equation}

	In order to prove \eqref{min}, it is enough to find $\lambda$, $\varrho>0$ such that 
	\begin{equation}\label{eq:whatweeneed}
		\inf_{\xi<\varrho}\frac{\psi_{\lambda}(\varrho)-\psi_{\lambda}(\xi)}{\varrho^{p}-\xi^{p}}<\frac{1}{p}.
	\end{equation}
	Taking $\xi=\varrho-\varepsilon$ for some $\eps \in (0,\varrho)$ we easily see that
	\begin{align*}
		\frac{\psi_{\lambda}(\varrho)-\psi_{\lambda}(\xi)}{\varrho^{p}-\xi^{p}} & 
		= \frac{\psi_{\lambda}(\varrho)-\psi_{\lambda}(\varrho-\varepsilon)}{\varrho^{p}-(\varrho-\varepsilon)^{p}}\\
		& = \frac{\psi_{\lambda}(\varrho)-\psi_{\lambda}(\varrho-\varepsilon)}{\varepsilon}\cdot\frac{-\frac{\varepsilon}{\varrho}}{\varrho^{p-1}[(1-\frac{\varepsilon}{\varrho})^{p}-1]}.
	\end{align*}
	Therefore, if we pass to the limit as $\eps\to 0$, then \eqref{eq:whatweeneed} holds if 
	\begin{equation}\label{eq:ezkell}
		\limsup_{\varepsilon\to0^{+}}\frac{\psi_{\lambda}(\varrho)-\psi_{\lambda}(\varrho-\varepsilon)}{\varepsilon}<\varrho^{p-1}
	\end{equation}
	is satisfied.
	
	Thus we have to verify \eqref{eq:ezkell} to get our Claim.
	First note that
	\begin{align*}
		&\frac{1}{\varepsilon}\left|\psi_{\lambda}(\varrho)-\psi_{\lambda}(\varrho-\varepsilon)\right|\\
		& =\frac{1}{\varepsilon}\left|\sup_{v\in B_{1}}I_1(\varrho v)-\sup_{v\in B_{1}}I_1((\varrho-\varepsilon)v)\right|\\
		& \leq\frac{1}{\varepsilon}\sup_{v\in B_{1}}\left|I_1(\varrho v)-I_1((\varrho-\varepsilon)v)\right|\\
		& \leq\frac{1}{\varepsilon}\sup_{v\in B_{1}}\left|\frac{(\varrho-\varepsilon)^{q}-\varrho^{q}}{q}\Big[\|F(\nabla v)\|_{q,\mu}^q+\|v\|_{q,\mu}^q\Big]+I_2(\varrho v)-I_2((\varrho-\varepsilon)v)\right|.
	\end{align*}
	The growth conditions in \textnormal{(H)(ii)}, along with the continuous embeddings $\Wp{p}\hookrightarrow L^{p^{*}}(\Omega)$ as well as $\Wp{p}\hookrightarrow L^{p_{*}}(\partial\Omega)$, yield
	\begin{align*}
		&\frac{\psi_{\lambda}(\varrho)-\psi_{\lambda}(\varrho-\varepsilon)}{\varepsilon}\\
		&\leq  \frac{1}{\varepsilon}\sup_{\|v\|_{1,{p},F}\leq1}\int_{\Omega}\l|\int_{(\varrho-\varepsilon)v_+(x)}^{\varrho v_+(x)}\l[ t^{p^{*}-1}+\lambda t^{\gamma-1}+\lambda g_1(x,t)\r]\diff t\r|\diff x\\
		& \quad+\frac{1}{\varepsilon}\sup_{\|v\|_{1,{p},F}\leq1}\int_{\partial\Omega}\left|\int_{(\varrho-\varepsilon)v_+(x)}^{\varrho v_+(x)}\left[t^{p_*-1}+g_2(x,t)\right]\diff t\right|\diff \sigma\\
		& \leq \frac{c_{p^{*}}^{p^{*}}}{p^{*}}\l|\frac{\varrho^{p^{*}}-(\varrho-\varepsilon)^{p^{*}}}{\varepsilon}\r|+\lambda \frac{c_{p^{*}}^{\gamma}|\Omega|^{\frac{p^{*}-\gamma}{p^{*}}}}{\gamma}\l|\frac{\varrho^{\gamma}-(\varrho-\varepsilon)^{\gamma}}{\varepsilon}\r|\\
		& \quad +\lambda a_{1}\frac{c_{p^{*}}^{\nu_1}|\Omega|^{\frac{p^{*}-\nu_1}{p^{*}}}}{\nu_1}\l|\frac{\varrho^{\nu_1}-(\varrho-\varepsilon)^{\nu_1}}{\varepsilon}\r|
		+  \lambda b_{1}\frac{c_{p^{*}}^{\theta_1}|\Omega|^{\frac{p^{*}-\theta_1}{p^{*}}}}{\theta_1}\l|\frac{\varrho^{\theta_1}-(\varrho-\varepsilon)^{\theta_1}}{\varepsilon}\r| \\  
		&\quad +\frac{c_{p_{*}}^{p_{*}}}{p_{*}}\l|\frac{\varrho^{p_{*}}-(\varrho-\varepsilon)^{p_{*}}}{\varepsilon}\r|+  a_2 \frac{c_{p_{*}}^{\nu_2}|\partial\Omega|^{\frac{p_{*}-\nu_2}{p_{*}}}}{\nu_2}\l|\frac{\varrho^{\nu_2}-(\varrho-\varepsilon)^{\nu_2}}{\varepsilon}\r|.
	\end{align*}
	Hence, we obtain
	\begin{align*}
		&\limsup_{\varepsilon\rightarrow 0^+}\frac{\psi_{\lambda}(\varrho)-\psi_{\lambda}(\varrho-\varepsilon)}{\varepsilon}\\
		&\leq  c_{p^{*}}^{p^{*}}\varrho^{p^{*}-1}+\lambda c_{p^{*}}^{\gamma}|\Omega|^{\frac{p^{*}-\gamma}{p^{*}}}\varrho^{\gamma-1}+\lambda a_{1}c_{p^{*}}^{\nu_1}|\Omega|^{\frac{p^{*}-\nu_1}{p^{*}}}\varrho^{\nu_1-1}\\
		&\quad +\lambda b_{1}c_{p^{*}}^{\theta_1}|\Omega|^{\frac{p^{*}-\theta_1}{p^{*}}}\varrho^{\theta_1-1}+c_{p_{*}}^{p_{*}}\varrho^{p_{*}-1}+  a_{2}c_{p_{*}}^{\nu_2}|\partial\Omega|^{\frac{p_{*}-\nu_2}{p_{*}}}\varrho^{\nu_2-1}.
	\end{align*}
	
	Now we consider the function $\Lambda\colon(0,\infty)\to \R$ given by
	\begin{align*}
		\Lambda(s)=\frac{s^{p-\gamma}-c_{p^{*}}^{p^*}s^{p^*-\gamma}-c_{p_{*}}^{p_*}s^{p_*-\gamma}-a_2 c_{p_{*}}^{\nu_2} |\partial \Omega|^\frac{p_*-\nu_2}{p_*}s^{\nu_2-\gamma}}{c_{p^{*}}^\gamma |\Omega|^\frac{p^*-\gamma}{p^*}+a_1 c_{p^{*}}^{\nu_1} |\Omega|^\frac{p^*-\nu_1}{p^*}s^{\nu_1-\gamma}+b_1c_{p^{*}}^{\theta_1} |\Omega|^\frac{p^*-\theta_1}{p^*}s^{\theta_1-\gamma}}.
	\end{align*}
	
	We easily see that $\displaystyle \lim_{s\to 0}\Lambda(s)=0$ and from L'Hospital's rule we verify that $\displaystyle \lim_{s\to \infty}\Lambda(s)=-\infty$. Moreover, since $\nu_2>p$, see \textnormal{(H)(ii)}, and due to the continuity of $\Lambda$, we know that there exists $s_0>0$ small enough such that $\Lambda(s)>0$ for all $s\in (0,s_0)$. Hence, we find $s_{\max}>0$ such that 
	\begin{align*}
		\Lambda(s_{\max})=\max_{s>0}\Lambda(s).
	\end{align*}
	Let us set
	\begin{align*}
		\lambda_*:=\Lambda\left(\min\{s_{\rm max},\varrho^*\}\right).
	\end{align*}
	If we now take $\lambda<\lambda_*$ and $\varrho<\min\{s_{\rm max},\varrho^*\}$, then \eqref{eq:ezkell} is satisfied and so \eqref{min}. This proves the Claim.
	
	From the Claim we know that there exists an element $\hat{u}\in\W$ with $\|\hat{u}\|_{1,p,F}\leq \varrho$ such that 
	\begin{align}\label{eq:u0}
		J_\lambda(\hat{u})<\frac{1}{p}\varrho^{p}-I_1(u_1)\quad\text{for all }u_1\in B_{\varrho}.
	\end{align}
	From Corollary \ref{corollary_semicontinuous} we know that $J_{\lambda}\big|_{B\varrho}$ is sequentially weakly lower semicontinuous. Therefore, $J_\lambda\colon \W\to\R$ restricted to $B_\varrho$ has a global minimizer $u \in \W$ with $\|u\|_{1,{p},F}\leq \varrho$. Suppose that $\|u\|_{1,p,F}=\varrho$. Then we have from \eqref{eq:u0} that
	\begin{align*}
		J_\lambda(u)=\frac{1}{p}\varrho^{p}-I_1(u)>J_{\lambda}(\hat{u}),
	\end{align*}
	which is a contradiction. We conclude that $u\in B_\varrho$ is a local minimizer for $J_{\lambda}$ with $\|u\|_{1,{p},F}<\varrho$ for $\lambda<\lambda_*$. 

	We claim that $u\neq 0$. Let $v\in\W$ be such that $v>0$ and let $t>0$. Then we have
	\begin{align*}
		J_\lambda(tv)&= \frac{t^p}{p}\| F(\nabla v)\|_p^p+\frac{t^q}{q}\| F(\nabla v)\|_{q,\mu}^q+\frac{t^p}{p}\|v\|_p^p+\frac{t^q}{q}\|v\|_{q,\mu}^q-\frac{t^{p^*}}{p^{*}}\|v\|_{p^*}^{p^{*}}\\
		&\quad  -\lambda\frac{t^\gamma}{\gamma}\int_{\Omega} v^\gamma\diff x- \lambda\frac{a_1t^{\nu_1} }{\nu_1}\|v\|_{\nu_1}^{\nu_1}- \lambda\frac{b_1t^{\theta_1} }{\theta_1}\|v\|_{\theta_1}^{\theta_1}\\
		&\quad -\frac{  t^{p_*}}{p_*}\|v\|^{p_*}_{p_*,\partial\Omega}-\frac{a_2t^{\nu_2} }{\nu_2}\|v\|^{\nu_2}_{\nu_2,\partial\Omega},
	\end{align*}
	which implies $J_\lambda(t v) <0$ for $t>0$ sufficiently small. Thus, $u \neq 0$.
	
	Let us now prove that $u\in\W$ is nonnegative a.\,e.\,in $\Omega$. First we observe that $u+t u_{-}\in B_\varrho$ and $(u+tu_-)_+=u_+$ for $t>0$ sufficiently small. Using this fact we have
	\begin{align*}
		0
		&\leq \frac{J_\lambda(u+tu_{-})- J_\lambda(u)}{t}\\ 
		&=\frac{1}{p}\into \frac{ F^p(\nabla (u+tu_{-}))-F^p(\nabla u)}{t}\diff x\ +\frac{1}{q}\int_\Omega \mu(x)\frac{F^q(\nabla(u+tu_-))-F^q(\nabla u)}{t}\diff x\\
		&\quad+\frac{1}{p}\into\frac{|u+tu_-|^p-|u|^p}{t} \diff x+\frac{1}{q}\int_\Omega
		\mu(x)\frac{|u+tu_-|^q-|u|^q}{t}\diff x.
	\end{align*}
	From this we conclude
	\begin{align*}
		0& \leq \lim_{t\to 0^+}\frac{J_\lambda(u+tu_{-})- J_\lambda(u)}{t}\\ &=\int_\Omega F^{p-1}(\nabla u)\nabla F( \nabla u)\cdot\nabla u_{-}\diff x +\int_\Omega \mu(x)F^{q-1}(\nabla u)\nabla F(\nabla u)\cdot\nabla u_-\diff x\\
		&\quad +\int_\Omega |u|^{p-2}uu_{-} \diff x +\int_\Omega \mu(x)|u|^{q-2}uu_{-} \diff x.
	\end{align*}
	However, from Proposition \ref{basic-properties} \textnormal{(iii)}, we know that 
	\begin{align*}
		\int_\Omega F(\nabla u)^{p-1}\nabla F( \nabla u)\cdot\nabla u_{-}\diff x
		&=-\int_\Omega F^{p-1}(\nabla u_-)\nabla F( \nabla u_-)\cdot\nabla u_{-}\diff x\\ 
		&=-\|F(\nabla u_-)\|_p^p
	\end{align*}
	and
	\begin{align*}
		\int_\Omega \mu(x)F^{q-1}(\nabla u)\nabla F(\nabla u)\cdot\nabla u_-\diff x=-\|F(\nabla u_-)\|_{q,\mu}^q.
	\end{align*}
	This leads to
	\begin{align*}
		0&\leq \lim_{t\to 0} \frac{J_\lambda(u+tu_{-})- J_\lambda(u)}{t}\\
		&=-\|F(\nabla u_-)\|_p^p-\|F(\nabla u_-)\|_{q,\mu}^q-\|u_-\|_p^p-\|u_-\|_{q,\mu}^q \leq 0.
	\end{align*}
	Therefore, $u_{-}=0$ and so $u\geq 0$ a.\,e.\,in $\Omega$.

	Let us now show that $u$ is positive in $\Omega$. We argue indirectly and suppose there is a set $C$ with positive measure such that $u=0$ in $C$.  Let $\varphi \in \W$ with $\ph>0$ and let $t>0$ small enough such that $u+t\varphi \in B_\sigma$ and $(u+t\varphi)^\gamma>u^\gamma$ a.\,e.\,in $\Omega$.  We obtain
	\begin{align*}
		\begin{split}
			0& \leq \frac{J_\lambda (u+t\varphi)- J_\lambda(u)}{t}\\
			& = \frac{1}{p}\frac{\|F(\nabla (u+t\varphi))\|^p_{p}-\|F(\nabla u)\|^p_{p}}{t} +\frac{1}{q}\frac{\|F(\nabla (u+t\varphi))\|^q_{q,\mu}-\|F(\nabla u)\|^q_{q,\mu}}{t} \\ 
			&\quad 
			+\frac{1}{p}\frac{\|u+t\varphi\|^p_{p}-\|u\|^p_{p}}{t} +\frac{1}{q}\frac{\|u+t\varphi\|^q_{q,\mu}-\|u\|^q_{q,\mu}}{t} -\frac{1}{p^*}\frac{\|u+t\varphi\|^{p^*}_{p^*}-\|u\|^{p^*}_{p^*}}{t}\\ 
			&\quad -\frac{\lambda}{\gamma t^{1-\gamma}}\int_C \varphi^\gamma\diff x -\frac{\lambda}{\gamma}\int_{\Omega\setminus C} \frac{(u+t\varphi)^{\gamma}-u^{\gamma}}{t}\diff x\\
			&\quad - \lambda\int_\Omega \frac{G_1(x,u+t\varphi)-G_1(x,u)}{t}\diff x \\
			&\quad -\frac{1}{p_*}\frac{\|u+t\varphi\|^{p_*}_{p_*,\partial\Omega}-\|u\|^{p_*}_{p_*,\partial\Omega }}{t} - \int_{\partial \Omega} \frac{G_2(x,u+t\varphi)-G_2(x,u)}{t}\diff \sigma \\
			&< \frac{1}{p}\frac{\|F(\nabla (u+t\varphi))\|^p_{p}-\|F(\nabla u)\|^p_{p}}{t} +\frac{1}{q}\frac{\|F(\nabla (u+t\varphi))\|^q_{q,\mu}-\|F(\nabla u)\|^q_{q,\mu}}{t} \\ 
			&\quad 
			+\frac{1}{p}\frac{\|u+t\varphi\|^p_{p}-\|u\|^p_{p}}{t} +\frac{1}{q}\frac{\|u+t\varphi\|^q_{q,\mu}-\|u\|^q_{q,\mu}}{t} -\frac{1}{p^*}\frac{\|u+t\varphi\|^{p^*}_{p^*}-\|u\|^{p^*}_{p^*}}{t}\\ 
			&\quad -\frac{\lambda}{\gamma t^{1-\gamma}}\int_C \varphi^\gamma\diff x 
			- \lambda\int_\Omega \frac{G_1(x,u+t\varphi)-G_1(x,u)}{t}\diff x \\
			&\quad -\frac{1}{p_*}\frac{\|u+t\varphi\|^{p_*}_{p_*,\partial\Omega}-\|u\|^{p_*}_{p_*,\partial\Omega }}{t}- \int_{\partial \Omega} \frac{G_2(x,u+t\varphi)-G_2(x,u)}{t}\diff \sigma.
		\end{split}
	\end{align*}
	This yields
	\begin{align*}
		\begin{split}
			0& \leq \frac{J_\lambda (u+t\varphi)- J_\lambda(u)}{t}
			\longrightarrow -\infty \quad \text{as} \quad  t\to 0^+,
		\end{split}
	\end{align*}
	a contradiction. Hence, $u>0$ a.\,e.\,in $\Omega$.

	Next we want to show that
	\begin{equation}\label{def1}
		u^{\gamma-1}\varphi\in L^1(\Omega) \quad \text{for all } \varphi\in\W
	\end{equation}
	and
	\begin{align}\label{def2}
		\begin{split}
			& \int_{\Omega}\left(F(\nabla u)^{p-1}+\mu(x)F(\nabla u)^{q-1}\right)\nabla F(\nabla u)\cdot\nabla\varphi\diff x + \int_{\Omega} u^{p-1}\ph \diff x\\ 
			& +\int_{\Omega} \mu(x) u^{q-1}\ph\diff x-\int_{\Omega}u^{p^{*}-1}\varphi\diff x -\lambda\int_{\Omega}u^{\gamma-1}\varphi\diff x-\lambda\int_{\Omega}g_1(x,u)\varphi\diff x\\
			&-\int_{\partial\Omega} u^{p_*-1}\ph \diff \sigma - \int_{\partial\Omega} g_2(x,u)\ph \diff \sigma\geq 0\quad 
		\end{split}
	\end{align}
	for all $\ph \in \W$ with $\ph \geq 0$.

	We choose $\varphi\in \W$ with $\varphi\geq 0$ and fix a decreasing sequence $\{t_n\}_{n \in\N} \subseteq (0,1]$ such that $\displaystyle \lim_{n\to \infty} t_n=0$.  It is clear that the functions
	\begin{align*}
		h_n(x)=\frac{(u(x)+t_n\varphi(x))^\gamma-u(x)^\gamma}{t_n}, \quad n\in\N
	\end{align*}
	are measurable and nonnegative. Moreover, we have
	\begin{align*}
		\lim_{n\to \infty} h_n(x)=\gamma u(x)^{\gamma-1}\varphi(x)\quad \text{for a.\,a.\,} x\in\Omega.
	\end{align*}
	Applying Fatou's lemma gives
	\begin{equation}\label{fatou}
		\int_\Omega u^{\gamma-1}\varphi\diff x\leq \frac{1}{\gamma}
		\liminf_{n\to\infty}\int_\Omega h_n\diff x.
	\end{equation}
	Then, for $n\in\N$ large enough, we obtain
	\begin{align*}
		\begin{split}
			0& \leq \frac{J_\lambda (u+t\varphi)- J_\lambda(u)}{t}\\
			& =  \frac{1}{p}\frac{\|F(\nabla (u+t_n\varphi))\|^p_{p}-\|F(\nabla u)\|^p_{p}}{t_n} +\frac{1}{q}\frac{\|F(\nabla (u+t_n\varphi))\|^q_{q,\mu}-\|F(\nabla u)\|^q_{q,\mu}}{t_n} \\ 
			&\quad 
			+\frac{1}{p}\frac{\|u+t_n\varphi\|^p_{p}-\|u\|^p_{p}}{t_n} +\frac{1}{q}\frac{\|u+t_n\varphi\|^q_{q,\mu}-\|u\|^q_{q,\mu}}{t_n} -\frac{1}{p^*}\frac{\|u+t_n\varphi\|^{p^*}_{p^*}-\|u\|^{p^*}_{p^*}}{t_n}\\ 
			&\quad -\frac{\lambda}{\gamma }\int_\Omega h_n\diff x 
			- \lambda\int_\Omega \frac{G_1(x,u+t_n\varphi)-G_1(x,u)}{t_n}\diff x \\
			&\quad -\frac{1}{p_*}\frac{\|u+t_n\varphi\|^{p_*}_{p_*,\partial\Omega}-\|u\|^{p_*}_{p_*,\partial\Omega }}{t_n}- \int_{\partial \Omega} \frac{G_2(x,u+t_n\varphi)-G_2(x,u)}{t_n}\diff \sigma.
		\end{split}
	\end{align*}
	Passing to the limit as $n\to \infty$ in the inequality above and using \eqref{fatou} we derive  \eqref{def1} and we have
	\begin{align*}
		\lambda\int_\Omega u^{\gamma-1}\varphi\diff x
		& \leq\int_{\Omega}\left(F(\nabla u)^{p-1}+\mu(x)F(\nabla u)^{q-1}\right)\nabla F(\nabla u)\cdot \nabla\varphi\diff x\\
		&\quad + \int_{\Omega} u^{p-1}\ph \diff x+\int_{\Omega} \mu(x) u^{q-1}\ph\diff x -\int_{\Omega}u^{p^{*}-1}\varphi\diff x\\
		&\quad -\lambda\int_{\Omega}u^{\gamma-1}\varphi\diff x-\lambda\int_{\Omega}g_1(x,u)\varphi\diff x\\
		&\quad -\int_{\partial\Omega} u^{p_*-1}\ph \diff \sigma - \int_{\partial\Omega} g_2(x,u)\ph \diff \sigma,
	\end{align*}
	which shows \eqref{def2}. Note that it is sufficient to prove the integrability in  \eqref{def1} for nonnegative
test functions $\ph\in \W$.

	Now, let $\varepsilon\in (0,1)$ be such that $(1+t)u\in B_\sigma$ for all $t\in[-\varepsilon, \varepsilon]$. Note that the function $\beta(t):=J_\lambda((1+t)u)$ has a local minimum in zero. We apply again Proposition \ref{basic-properties} \textnormal{(iii)} in order to get
	\begin{align}\label{beta}
		\begin{split}
			0&=\beta'(0)=\lim_{t\to 0}\frac{J_\lambda((1+t)u)-J_\lambda(u)}{t}\\
			& = \| F(\nabla u)\|_p^p+\| F(\nabla u)\|_{q,\mu}^q +\|u\|_p^p+\|u\|_{q,\mu}^q-\|u\|_{p^*}^{p^*}\\
			&\quad -\lambda\int_\Omega u^{\gamma}\diff x-\lambda\int_\Omega g_1(x,u)u\diff x-\|u\|_{p_*,\partial\Omega}^{p_*}-\int_{\partial \Omega}g_2\l(x,u\r)u\diff \sigma.
		\end{split}
	\end{align}
	Finally, we need to show that $u$ is a positive weak solution of \eqref{problem}. To this end, let $v\in\W$ and take the test function $\ph=(u+\eps v)_+\in\W$ in \eqref{def2}. Taking \eqref{beta} into account we have
	\begin{align}\label{last}
		0
		&\leq \int_{\{u+\eps v\geq 0\}}\left(F^{p-1}(\nabla u)+\mu(x)F^{q-1}(\nabla u)\right)\nabla F(\nabla u)\cdot \nabla(u+\eps v)\diff x
		\nonumber\\
		&\quad
		+\int_{\{u+\eps v\geq 0\}}\left(u^{p-1}+\mu(x)u^{q-1}\right ) (u+\eps v)\diff x-\int_{\Omega}u^{p^{*}-1}(u+\eps v)\diff x
		\nonumber\\
		& \quad  
		-\lambda\int_{\Omega}u^{\gamma-1}(u+\eps v)\diff x-\lambda\int_{\Omega}g_1(x,u)(u+\eps v)\diff x
		\nonumber\\
		&\quad 
		-\int_{\partial\Omega}\l(u^{p_*-1}+g_2(x,u)\r)(u+\eps v)\diff \sigma
		\nonumber\\
		&= \| F(\nabla u)\|_p^p+\| F(\nabla u)\|_{q,\mu}^q +\|u\|_p^p+\|u\|_{q,\mu}^q-\|u\|_{p^*}^{p^*}-\lambda\int_\Omega u^{\gamma}\diff x
		\nonumber\\
		&\quad -\lambda\int_\Omega g_1(x,u)u\diff x-\|u\|_{p_*,\partial\Omega}^{p_*}-\int_{\partial \Omega}g_2\l(x,u\r)u\diff \sigma
		\nonumber\\
		&\quad +\varepsilon \int_\Omega F^{p-1}(\nabla u)\nabla F(\nabla u)\cdot\nabla v \diff x+\varepsilon \int_\Omega\mu(x) F^{q-1}(\nabla u)\nabla F(\nabla u)\cdot\nabla v \diff x
		\nonumber\\
		&\quad +\varepsilon \int_\Omega u^{p-1} v \diff x+\varepsilon \int_\Omega\mu(x) u^{q-1}v \diff x-\eps\int_\Omega u^{p^*-1}v\diff x- \eps \lambda\int_\Omega u^{\gamma-1}v \diff x
		\nonumber\\
		&\quad -\eps \lambda\int_\Omega g_1(x,u)v\diff x-\varepsilon\int_{\partial \Omega}u^{p_*-1}v\diff \sigma-\eps \int_{\partial\Omega} g_2(x,u)v\diff \sigma
		\nonumber\\
		&\quad  -\int_{\{u+\eps v< 0\}}F^p(\nabla u)\diff x-\varepsilon\int_{\{u+\eps v< 0\}}F^{p-1}(\nabla u)\nabla F(\nabla u)\cdot\nabla v \diff x\\
		&\quad -\int_{\{u+\eps v< 0\}}\mu(x)F^q(\nabla u)\diff x-\varepsilon\int_{\{u+\eps v< 0\}}\mu(x) F^{q-1}(\nabla u)\nabla F(\nabla u)\cdot\nabla v \diff x
		\nonumber\\
		&\quad  -\int_{\{u+\eps v< 0\}} u^{p-1}(u+\eps v)\diff x-\int_{\{u+\eps v< 0\}} \mu(x)u^{q-1}(u+\eps v)\diff x
		\nonumber\\
		&\quad +\int_{\{u+\eps v< 0\}} u^{p^*-1}(u+\eps v)\diff x+\lambda\int_{\{u+\eps v<0\}} u^{\gamma-1}(u+\eps v)\diff x
		\nonumber\\
		&\quad +\lambda\int_{\{u+\eps v< 0\}} g_1(x,u)(u+\eps v)\diff x+\int_{\partial \Omega}u^{p_*-1}(u+\eps v)\diff \sigma
		\nonumber\\
		&\quad +\int_{\partial \Omega}g_2(x,u)(u+\eps v)\diff \sigma
		\nonumber\\
		&\leq \varepsilon\left[ \int_\Omega \l(F^{p-1}(\nabla u)+\mu(x) F^{q-1}(\nabla u)\r)\nabla F(\nabla u)\cdot\nabla v \diff x+\int_\Omega u^{p-1} v \diff x\r.
		\nonumber\\
		& \l. \qquad+ \int_\Omega\mu(x) u^{q-1}v \diff x-\int_\Omega u^{p^*-1}v\diff x-\lambda\int_\Omega u^{\gamma-1}v\diff x-\lambda\int_\Omega g_1(x,u) v\diff  x\right.
		\nonumber\\
		&\left. \qquad -\int_{\partial \Omega}u^{p_*-1}v\diff \sigma- \int_{\partial\Omega} g_2(x,u)v\diff \sigma \right]
		\nonumber\\
		&\quad -\varepsilon\int_{\{u+\eps v< 0\}}F^{p-1}(\nabla u)\nabla F(\nabla u)\cdot \nabla v\diff x
		\nonumber\\
		& \quad -\varepsilon\int_{\{u+\eps v< 0\}}\mu(x) F^{q-1}(\nabla u)\nabla F(\nabla u)\cdot\nabla v \diff x
		\nonumber\\
		&\quad  -\eps \int_{\{u+\eps v< 0\}} u^{p-1}v\diff x-\eps \int_{\{u+\eps v< 0\}} \mu(x)u^{q-1}v\diff x.\nonumber
	\end{align}
	Note that the measure of the set $\{u+\varepsilon v< 0\}$ goes to $0$ as $\varepsilon\to 0$. Hence,
	\begin{align*}
		&\int_{\{u+\varepsilon v < 0\}}F^{p-1}(\nabla u)\nabla F(\nabla u)\nabla v \diff x \to 0 \quad\text{as } \eps \to 0,\\
		&\int_{\{u+\varepsilon v< 0\}}\mu(x) F^{q-1}(\nabla u)\nabla F(\nabla u)\nabla v\diff x \to 0 \quad\text{as } \eps \to 0,\\
		& \int_{\{u+\eps v< 0\}} u^{p-1}v\diff x \to 0 \quad\text{as } \eps \to 0,\\
		&  \int_{\{u+\eps v< 0\}} \mu(x)u^{q-1}v\diff x\to 0 \quad\text{as } \eps \to 0.
	\end{align*}
	Therefore, dividing the inequality \eqref{last} by $\eps$ and passing to the limit as $\varepsilon\to 0$ we conclude that
	\begin{align*}
			& \int_{\Omega}\left(F(\nabla u)^{p-1}+\mu(x)F(\nabla u)^{q-1}\right)\nabla F(\nabla u)\cdot\nabla v\diff x + \int_{\Omega} u^{p-1}v \diff x\\ 
			& +\int_{\Omega} \mu(x) u^{q-1}v\diff x-\int_{\Omega}u^{p^{*}-1} v\diff x -\lambda\int_{\Omega}u^{\gamma-1} v\diff x-\lambda\int_{\Omega}g_1(x,u) v\diff x\\
			&-\int_{\partial\Omega} u^{p_*-1}v \diff \sigma - \int_{\partial\Omega} g_2(x,u)v \diff \sigma\geq 0. 
	\end{align*}
	Since $v\in\W$ was arbitrary chosen, we see from the last inequality that equality must hold. Therefore, $u\in\W$ is a weak solution of problem \eqref{problem} in the sense of Definition \ref{def_weak_solution}.
\end{proof}

\section*{Acknowledgments}
The authors wish to thank the knowledgeable referee for his/her remarks in order to improve the paper.\\
C.\,Farkas was supported by the National Research, Development and Innovation Fund of Hungary, financed under the K\_18 funding scheme, Project No.\,127926 and by the Sapientia Foundation - Institute for Scientific Research, Romania, Project No.\,17/11.06.2019.

A.\,Fiscella is member of the {Gruppo Nazionale per l'Analisi Ma\-tema\-tica, la Probabilit\`a e
le loro Applicazioni} (GNAMPA) of the {Istituto Nazionale di Alta Matematica ``G. Severi"} (INdAM).
A.\,Fiscella realized the manuscript within the auspices of the INdAM-GNAMPA project titled "Equazioni alle derivate parziali: problemi e modelli" (Prot\_20191219-143223-545), of the FAPESP Project titled "Operators with non standard growth" (2019/23917-3), of the FAPESP Thematic Project titled "Systems and partial differential equations" (2019/02512-5) and of the CNPq Project titled "Variational methods for singular fractional problems" (3787749185990982).

\end{document}